\theoremstyle{plain}
\newtheorem{theorem}{Theorem}[section]
\newtheorem{lemma}[theorem]{Lemma}
\theoremstyle{definition}
\newtheorem{definition}[theorem]{Definition}
\newtheorem{remark}[theorem]{Remark}
\newtheorem{cor}[theorem]{Corollary}
\theoremstyle{remark}
\begin{document}

\title [On generalized Davis-Wielandt radius inequalities]{On generalized Davis-Wielandt radius inequalities of semi-Hilbertian space operators} 

\author[ Aniket Bhanja, Pintu Bhunia and  Kallol Paul]{Aniket Bhanja, Pintu Bhunia and Kallol Paul}

\address[Bhanja] {Department of Mathematics, Vivekananda College Thakurpukur, Kolkata, West Bengal, India}
\email{aniketbhanja219@gmail.com}

\address[Bhunia] {Department of Mathematics, Jadavpur University, Kolkata 700032, West Bengal, India}
\email{pintubhunia5206@gmail.com}

\address[Paul] {Department of Mathematics, Jadavpur University, Kolkata 700032, West Bengal, India}
\email{kalloldada@gmail.com}

\thanks{Pintu Bhunia would like to thank UGC, Govt. of India for the financial support in the form of SRF. Prof. Paul would like to thank RUSA 2.0, Jadavpur University for  partial support. }
\thanks{}


\subjclass[2010]{Primary 47A12, 46C05,  Secondary 47A30, 47A50.}
\keywords{$A$-Davis-Wielandt radius, $A$-numerical radius, $A$-operator seminorm, Semi-Hilbertian space.}

\maketitle
\begin{abstract}
Let $A$ be a positive (semidefinite) operator on a complex Hilbert space $\mathcal{H}$ and let $\mathbb{A}=\left(\begin{array}{cc}
	A & O\\
	O & A
	\end{array}\right).$
We obtain upper and lower bounds for the $A$-Davis-Wielandt radius of semi-Hilbertian space operators, which generalize and improve on the existing ones.  We also obtain  upper bounds for the $\mathbb{A}$-Davis-Wielandt radius of $2 \times 2$ operator matrices. Finally, we determine the exact value for the $\mathbb{A}$-Davis-Wielandt radius of two operator matrices $\left(\begin{array}{cc}
I & X\\
0 & 0
\end{array}\right)$ and $\left(\begin{array}{cc}
0 & X\\
0 & 0
\end{array}\right)$, where $X $ is a semi-Hilbertian space operator.

\end{abstract}

\section{\textbf{Introduction and Preliminaries}}
\noindent 
Let $\mathcal{B}(\mathcal{H})$ denote the $C^{\ast}$-algebra of all bounded linear operators acting on a complex Hilbert space $\mathcal{H}$ with usual  inner product $\langle \cdot, \cdot \rangle$ and the corresponding norm $\|\cdot\| $. The letters $I$ and $O$ stand for the identity operator and the zero operator on $\mathcal{H}$, respectively. For $T\in \mathcal{B}(\mathcal{H})$, we denote by $\mathcal{R}(T)$ and $\mathcal{N}(T)$ the range and the null space of $T$, respectively. By $\overline{\mathcal{R}(T)}$ we denote the norm closure of $\mathcal{R}(T)$. Let $T^*$ be the adjoint of $T$. 
The cone of all positive (semidefinite) operators is given by:
$$\mathcal{B}(\mathcal{H})^+=\left\{A\in \mathcal{B}(\mathcal{H})\,:\,\langle Ax, x\rangle\geq 0,\; \forall\;x\in \mathcal{H}\;\right\}.$$
Every $A\in \mathcal{B}(\mathcal{H})^+$ defines the following positive semidefinite sesquilinear form:
$$\langle\cdot,\cdot\rangle_{A}:\mathcal{H}\times \mathcal{H}\longrightarrow\mathbb{C},\;(x,y)\longmapsto\langle x, y\rangle_{A} =\langle Ax, y\rangle,$$
and the seminorm induced by the above sesquilinear form is given by:
 $$\|x\|_A=\sqrt{\langle x, x\rangle_A},~~ x\in \mathcal{H}.$$ 
This makes $\mathcal{H}$ into a semi-Hilbertian space. It is easy to observe that $\|x\|_A=0$ if and only if $x\in \mathcal{N}(A)$. Therefore, $\|\cdot\|_A$ is a norm on $\mathcal{H}$ if and only if $A$ is injective. Also we observe that $(\mathcal{H},\|\cdot\|_A)$ is complete if and only if $\mathcal{R}(A)$ is closed in $\mathcal{H}$. Let us fix the alphabet $A$ for positive (semidefinite) operator on $\mathcal{H}$ and  we also fix $\mathbb{A}=\left(\begin{array}{cc}
	A & O\\
	O & A
	\end{array}\right).$ 
\begin{definition}
Let $T \in \mathcal{B}(\mathcal{H})$. An operator $S\in\mathcal{B}(\mathcal{H})$ is called an $A$-adjoint of $T$ if the equality $\langle Tx, y\rangle_A=\langle x, Sy\rangle_A$ holds, for all $x,y\in \mathcal{H}$.    
\end{definition}
\noindent Therefore, $S$ is an $A$-adjoint of $T$ if and only if $S$ is a solution of the equation $AX=T^*A$ in $\mathcal{B}(\mathcal{H})$. For $T \in \mathcal{B}(\mathcal{H})$, the existence of an $A$-adjoint of $T$ is not guaranteed. The set of all operators acting on $\mathcal{H}$ that admit $A$-adjoints is denoted by $\mathcal{B}_{A}(\mathcal{H})$. It follows from Douglas Theorem \cite{doug} that
\begin{eqnarray*}
\mathcal{B}_{A}(\mathcal{H})& =& \left\{T\in \mathcal{B}(\mathcal{H})\,:\;\mathcal{R}(T^{*}A)\subseteq \mathcal{R}(A)\right\}.
 \end{eqnarray*}
\noindent By Douglas Theorem \cite{doug}, we have  if $T\in \mathcal{B}_A(\mathcal{H})$ then the operator equation $AX=T^*A$ has a unique solution, denoted by $T^{\sharp_A}$, satisfying $\mathcal{R}(T^{\sharp_A})\subseteq \overline{\mathcal{R}(A)}$. Note that $T^{\sharp_A}=A^\dag T^*A$, where $A^\dag$ is the Moore-Penrose inverse of $A$ (see \cite{acg2}). Also, we have   $AT^{\sharp_A}=T^*A.$  An operator $T\in \mathcal{B}(\mathcal{H})$ is said to be $A$-bounded if there exists $c>0$ such that $ \|Tx\|_{A} \leq c \|x\|_{A},$ for all $x\in \mathcal{H}. $  We observe that $ \mathcal{B}_{A^{1/2}}(\mathcal{H}) $ is the collection of all $A$-bounded operators, i.e.,
\begin{equation*}
	\mathcal{B}_{A^{1/2}}(\mathcal{H})=\left\{T \in \mathcal{B}(\mathcal{H})\,:\;{\exists} \;c > 0\,\mbox{such that}\;\|Tx\|_{A} \leq c \|x\|_{A},\;\forall\,x\in \mathcal{H}  \right\}.
\end{equation*}

\noindent It is well-known that $\mathcal{B}_{A}(\mathcal{H})$ and $\mathcal{B}_{A^{1/2}}(\mathcal{H})$ are two subalgebras of $\mathcal{B}(\mathcal{H})$ which are neither closed nor dense in $\mathcal{B}(\mathcal{H})$. Moreover, the following inclusions 
$$\mathcal{B}_{A}(\mathcal{H})\subseteq\mathcal{B}_{A^{1/2}}(\mathcal{H})\subseteq \mathcal{B}(\mathcal{H})$$ 
hold with equality if $A$ is injective and has closed range. Let us now define $A$-selfadjoint, $A$-normal and $A$-unitary operators.
  
\begin{definition}
An operator $T\in \mathcal{B}(\mathcal{H})$ is called $A$-selfadjoint if $AT$ is selfadjoint, i.e., $AT=T^*A$ and it is called $A$-positive if $AT\geq 0$. 
\end{definition}
\noindent Observe that if $T$ is $A$-selfadjoint then $T\in \mathcal{B}_{A}(\mathcal{H})$. However, in general, it does not always imply $T=T^{\sharp_A}$. 
An operator $T\in \mathcal{B}_{A}(\mathcal{H})$ satisfies $T=T^{\sharp_A}$ if and only if T is $A$-selfadjoint and $\mathcal{R}(T) \subseteq \overline{\mathcal{R}(A)}$.

\begin{definition}
An operator $T\in \mathcal{B}_A(\mathcal{H})$ is said to be $A$-normal if $TT^{\sharp_A}=T^{\sharp_A}T$. 
\end{definition}
\noindent We know that every selfadjoint operator is normal. But, an $A$-selfadjoint operator is not necessarily $A$-normal (see \cite[Example 5.1]{BFS}).

\begin{definition}
An operator $U\in  \mathcal{B}_A(\mathcal{H})$ is said to be $A$-unitary if $\|Ux\|_A=\|U^{\sharp_A}x\|_A=\|x\|_A$, for all $x\in \mathcal{H}$.
\end{definition}
\noindent It was shown in \cite{acg1} that an operator $U\in  \mathcal{B}_A(\mathcal{H})$ is $A$-unitary if and only if $U^{\sharp_A} U=(U^{\sharp_A})^{\sharp_A} U^{\sharp_A}=P_A,$ where $P_A$ denotes the projection  onto $\overline{\mathcal{R}(A)}$. We mention here that if $T\in \mathcal{B}_A(\mathcal{H})$ then $T^{\sharp_A}\in \mathcal{B}_A(\mathcal{H})$ and $(T^{\sharp_A})^{\sharp_A}=P_ATP_A$.

\noindent Let $T \in \mathcal{B}_{A^{1/2}}(\mathcal{H})$. The $A$-operator seminorm and the $A$-minimum modulus of $T$ are defined respectively as:
\begin{eqnarray*}
	\|T\|_A &=&\sup \left \{\frac{\|Tx\|_A}{\|x\|_A}: ~~{\substack{x\in \overline{\mathcal{R}(A)},~~ x\not=0}}\right \}=\sup\left\{\|Tx\|_{A}\,:\;x\in \mathcal{H},\,\|x\|_{A}= 1\right\},\\
	m_A(T) &=&\inf \left \{\frac{\|Tx\|_A}{\|x\|_A}: ~~{\substack{x\in \overline{\mathcal{R}(A)},~~ x\not=0}}\right \}=\inf\left\{\|Tx\|_{A}\,:\;x\in \mathcal{H},\,\|x\|_{A}= 1\right\}.
\end{eqnarray*}
\noindent Let $T \in \mathcal{B}_{A^{1/2}}(\mathcal{H})$. The $A$-numerical range, the $A$-numerical radius and the $A$-Crawford number of $T$ are defined respectively as:  
\begin{eqnarray*}
W_A(T) &=& \{\langle Tx, x\rangle_A: x\in \mathcal{H}, \|x\|_A=1\},\\
w_A(T)&=&\sup\{|c|: c \in W_A(T)\} ~\mbox{and}\\
c_A(T)&=&\inf\{|c|: c \in W_A(T)\}.
\end{eqnarray*}
The $A$-operator seminorm attainment set of $T,$ denoted as $M^{A}_T$, is defined as the set of all $A$-unit vectors in $\mathcal{H}$ at which $T$ attains its $A$-operator seminorm, i.e., $$ M^{A}_T =\left \{ x\in \mathcal{H} :   \|Tx\|_A=\|T\|_A, \|x\|_A=1   \right \}.$$ Likewise 
the $A$-numerical radius attainment set and the $A$-Crawford number attainment set of $T,$  denoted as $W^{A}_T$ and $c^{A}_T$ respectively, are defined as
\begin{eqnarray*}
	W^{A}_T &=& \left \{ x\in \mathcal{H} :   |\langle Tx,x\rangle_A|=w_A(T), \|x\|_A=1   \right \},\\
	c^{A}_T &=& \left \{ x\in \mathcal{H} :   |\langle Tx,x\rangle_A|=c_A(T), \|x\|_A=1   \right \}. 
\end{eqnarray*}
It is well known that $\|\cdot\|_A$ and $w_A(\cdot)$ are equivalent seminorm on $\mathcal{B}_{A^{1/2}}(\mathcal{H})$, satisfying the following inequality:
\begin{equation*}
\tfrac{1}{2}\|T\|_A \leq w_A(T)\leq \|T\|_A,~~T \in \mathcal{B}_{A^{1/2}}(\mathcal{H}).
\end{equation*}
The first inequality becomes equality if $AT^2=O$  and the second inequality becomes equality if $T$ is $A$-normal (see \cite {F}).
For $T\in B_A(\mathcal{H})$, we write ${Re}_A(T)=\frac{1}{2}(T+T^{\sharp_A})$ and ${Im}_A(T)=\frac{1}{2{\rm i}}(T-T^{\sharp_A})$. For every $A$-selfadjoint operator $T$, we have (see \cite{Z}) $$w_A(T)=\|T\|_A.$$
\noindent Also $T^{\sharp_A}T$, $TT^{\sharp_A}$ are A-selfadjoint and A-positive operators satisfying the following equality:
$$\|T^{\sharp_A}T\|_A =\|TT^{\sharp_A}\|_A =\|T\|^2_A =\|T^{\sharp_A}\|^2_A.$$ 
For $T\in B_A(\mathcal{H})$, we write $|T|^2_A=T^{\sharp_A}T$. For $T,S \in  B_A(\mathcal{H})$,  $(TS)^{\sharp_A}=S^{\sharp_A}T^{\sharp_A}$, $\|TS\|_A\leq \|T\|_A\|S\|_A$ and  $\|Tx\|_A\leq \|T\|_A\|x\|_A$, for all $x\in \mathcal{H}$.  For further readings we refer the readers to \cite{acg1,acg2}.\\

\noindent Motivated by the study of the $A$-numerical radius of semi-Hilbertian space operators, we here study the $A$-Davis-Wielandt radius of semi-Hilbertian space operators. This is a generalization of the Davis-Wielandt radius of Hilbert space operators. The Davis-Wielandt shell and the Davis-Wielandt radius of an operator $T\in \mathcal{B}(\mathcal{H})$ are defined respectively as (see \cite{D,W}): 
\begin{eqnarray*}
 DW(T)& = &\left\{ \left(\langle Tx, x \rangle, \|Tx\|^2 \right) : x\in \mathcal{H}, \|x\|=1 \right\},\\
 dw(T)&= &\sup \left\{ \sqrt{|\langle Tx, x \rangle|^2+ \|Tx\|^4} : x\in \mathcal{H}, \|x\|=1 \right \}.
\end{eqnarray*}
Recently many mathematicians \cite{LP,LPS,LSZ,ZS,ZMCN} have studied the  Davis-Wielandt shell and the Davis-Wielandt radius of an operator $T\in \mathcal{B}(\mathcal{H})$.
The $A$-Davis-Wielandt shell and the $A$-Davis-Wielandt radius of an operator $T\in \mathcal{B}_{A^{1/2}}(\mathcal{H})$ are defined respectively as (see \cite{FA}): 
\begin{eqnarray*}
 DW_A(T)& = &\left\{ \left(\langle Tx, x \rangle_A, \|Tx\|_A^2 \right) : x\in \mathcal{H}, \|x\|_A=1 \right\},\\
 dw_A(T)&= &\sup \left\{ \sqrt{|\langle Tx, x \rangle_A|^2+ \|Tx\|_A^4} : x\in \mathcal{H}, \|x\|_A=1 \right \}.
\end{eqnarray*} 
 It is easy to see that the $A$-Davis-Wielandt radius of $T\in \mathcal{B}_{A^{1/2}}(\mathcal{H})$ satisfying the following inequality:
\begin{eqnarray}\label{1st}
\max \{ w_A(T), \|T\|_A^2 \} \leq dw_A(T)\leq \sqrt{w^2_A(T)+\|T\|_A^4}.
\end{eqnarray}
\noindent Recently, Feki in \cite{F2} have obtained some upper bounds for the $A$-Davis-Wielandt radius of operators in $\mathcal{B}_{A}({\mathcal{H}}).$
 In this paper, we study about the equality of the lower bounds for the $A$-Davis-Wielandt radius of $A$-bounded operators mentioned  in (\ref{1st}). We obtain  upper and lower bounds for the $A$-Davis-Wielandt radius of operators in $\mathcal{B}_{A}({\mathcal{H}}),$ which generalize and improve on the existing ones. Further we obtain inequalities for the $\mathbb{A}$-Davis-Wielandt radius of $2 \times 2$ operator matrices in $\mathcal{B}_{\mathbb{A}}({\mathcal{H}\oplus \mathcal{H}}),$ which generalize  inequalities in \cite{BBBP}. Next, we obtain an upper bound for the $A$-Davis-Wielandt radius of sum of product operators in $\mathcal{B}_{A}({\mathcal{H}}),$ i.e., if $P,Q,X,Y \in \mathcal{B}_{A}(\mathcal{H})$ then for any $t \in \mathbb{R}\setminus\{0\}$, we have 
$$dw_{A}^2(PXQ^{\sharp_A}\pm QYP^{\sharp_A}) \leq (t^2\|P\|_{A}^2+\frac{1}{t^2}\|Q\|_{A}^2)^2 \{ (t^2\|PX\|_{A}^2+\frac{1}{t^2}\|QY\|_{A}^2)^2 +\alpha^2 \},$$ 
where $\alpha=w_{\mathbb{A}}\left(\begin{array}{cc}
	O & X\\
	Y & O
	\end{array}\right).$
Finally, we compute the exact value for the $\mathbb{A}$-Davis-Wielandt radius of two operator matrices $\left(\begin{array}{cc}
I & X\\
0 & 0
\end{array}\right)$ and $\left(\begin{array}{cc}
0 & X\\
0 & 0
\end{array}\right)$, where $X \in \mathcal{B}_{A^{1/2}}(\mathcal{H})$.

\section{ \textbf{Main results}}
\noindent We begin this section with the study of the equality conditions of both upper and lower bounds of $A$-bounded operators mentioned  in (\ref{1st}). Fisrt we mention  the following known result $($see \cite[Th. 11 and Prop. 4]{FA}$)$.

\begin{theorem}\label{uppereql}
Let $T\in \mathcal{B}_{A^{1/2}}({\mathcal{H}}).$ Then the following conditions are equivalent:\\
$(i)$ $dw_{A}(T)=\sqrt{w^2_A(T)+\|T\|_A^4}$.\\
$(ii)$ T is $A$-normaloid, i.e, $w_A(T)=\|T\|_A$.\\
$(iii)$ There exist a sequence of $A$-unit vectors $\{x_n\}$ in $\mathcal{H}$ such that $$\lim_{n\to \infty}\|Tx_n\|_A=\|T\|_A ~~\mbox{and}~~ \lim_{n\to \infty}|\langle Tx_n,x_n\rangle _A|=w_A(T).$$
\end{theorem}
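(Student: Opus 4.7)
The plan is to establish the cycle $(ii) \Rightarrow (iii) \Rightarrow (i) \Rightarrow (iii) \Rightarrow (ii)$. Three of the implications are elementary. For $(ii) \Rightarrow (iii)$, I would choose a sequence $\{x_n\}$ of $A$-unit vectors with $|\langle Tx_n, x_n\rangle_A| \to w_A(T)$; under $(ii)$ this limit equals $\|T\|_A$, and the $A$-Cauchy--Schwarz inequality $|\langle Tx_n, x_n\rangle_A| \leq \|Tx_n\|_A \leq \|T\|_A$ combined with the squeeze theorem forces $\|Tx_n\|_A \to \|T\|_A$. For $(iii) \Rightarrow (i)$, I would substitute the sequence from $(iii)$ into the definition of $dw_A(T)$ to obtain $dw_A(T) \geq \sqrt{w_A^2(T) + \|T\|_A^4}$, which combined with the upper bound in~(\ref{1st}) yields equality.

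For $(i) \Rightarrow (iii)$, the key observation is that a sup-realizing sequence $\{x_n\}$ for $dw_A(T)^2$ satisfies $|\langle Tx_n, x_n\rangle_A|^2 + \|Tx_n\|_A^4 \to w_A^2(T) + \|T\|_A^4$, while each summand is individually bounded above by $w_A^2(T)$ and $\|T\|_A^4$ respectively. A subsequence argument then shows that any strict deficit in the $\liminf$ of one summand would prevent the sum from reaching $w_A^2(T) + \|T\|_A^4$; therefore both $|\langle Tx_n, x_n\rangle_A| \to w_A(T)$ and $\|Tx_n\|_A \to \|T\|_A$ hold simultaneously, giving $(iii)$ and closing the equivalence $(i) \Leftrightarrow (iii)$.

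The substantive step is $(iii) \Rightarrow (ii)$. Setting $\alpha_n = \langle Tx_n, x_n\rangle_A$ and $y_n = Tx_n - \alpha_n x_n$, one has $\langle y_n, x_n\rangle_A = 0$, and the decomposition $\|Tx_n\|_A^2 = |\alpha_n|^2 + \|y_n\|_A^2$ combined with $(iii)$ gives $\|y_n\|_A^2 \to \|T\|_A^2 - w_A^2(T)$. Assume for contradiction that $w_A(T) < \|T\|_A$, so that $\|y_n\|_A \to c > 0$; then $u_n := y_n/\|y_n\|_A$ is $A$-unit and $A$-orthogonal to $x_n$, so the vectors $z_n(\theta,\phi) = \cos\theta \cdot x_n + e^{i\phi}\sin\theta \cdot u_n$ are $A$-unit for every $\theta$ and $\phi$. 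Expanding $\langle Tz_n, z_n\rangle_A = \cos^2\theta \cdot \alpha_n + \cos\theta\sin\theta (e^{-i\phi}\|y_n\|_A + e^{i\phi}\gamma_n) + \sin^2\theta \cdot \delta_n$, where $\gamma_n = \langle Tu_n, x_n\rangle_A$ and $\delta_n = \langle Tu_n, u_n\rangle_A$, imposing $|\langle Tz_n, z_n\rangle_A| \leq w_A(T)$, then optimizing the phase $\phi$ so that the first-order-in-$\theta$ term aligns with $\arg\alpha_n$, and finally letting $n \to \infty$ followed by $\theta \to 0^+$, the argument is intended to force $\|y_n\|_A \to 0$, contradicting $c > 0$. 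The main technical obstacle is that the cross-term $\gamma_n$ couples into the first-order perturbation and, at an unfavorable relative phase with $\alpha_n$, can partially cancel the $\|y_n\|_A$ contribution; resolving this requires a careful simultaneous optimization of $\phi$ against both $\alpha_n$ and $\gamma_n$ and possibly a second-order expansion (or a refinement to a multi-parameter perturbation in a higher-dimensional $A$-subspace) to complete the contradiction.
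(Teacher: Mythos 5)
The paper does not actually prove this theorem: it is quoted as a known result from \cite[Th.~11 and Prop.~4]{FA}, so there is no in-paper proof to compare against and your argument has to stand on its own. Three of your four implications do stand: $(ii)\Rightarrow(iii)$ by Cauchy--Schwarz and squeezing, $(iii)\Rightarrow(i)$ by inserting the sequence into the definition and invoking the upper bound in (\ref{1st}), and $(i)\Rightarrow(iii)$ by the observation that if $a_n\le \mathcal{A}$, $b_n\le \mathcal{B}$ and $a_n+b_n\to \mathcal{A}+\mathcal{B}$ then $a_n\to \mathcal{A}$ and $b_n\to \mathcal{B}$. The gap is in $(iii)\Rightarrow(ii)$, which is where all the content of the theorem lives, and your own writeup concedes it is not closed. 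The obstruction you flag is genuine: writing $\gamma_n=\langle Tu_n,x_n\rangle_A$, the first-order term $\cos\theta\sin\theta\,(e^{-{\rm i}\phi}\|y_n\|_A+e^{{\rm i}\phi}\gamma_n)$ is purely imaginary for \emph{every} $\phi$ when $\gamma_n=-\|y_n\|_A$, so no phase choice yields a first-order growth of $|\langle Tz_n,z_n\rangle_A|$, and the second-order expansion at that configuration only delivers $\|T\|_A^2-w_A^2(T)\le w_A^2(T)$, which is not a contradiction. As written, the proof is therefore incomplete.

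A clean way to finish $(iii)\Rightarrow(ii)$ is to stop perturbing and instead compress. Assuming $\beta:=\lim_n\|y_n\|_A>0$ (otherwise $\|T\|_A^2=\lim(|\alpha_n|^2+\|y_n\|_A^2)=w_A^2(T)$ and you are done), form the $2\times2$ compressions $M_n=\bigl(\begin{smallmatrix}\alpha_n&\gamma_n\\ \|y_n\|_A&\delta_n\end{smallmatrix}\bigr)$ of $T$ to the $A$-orthonormal pair $\{x_n,u_n\}$; then $W(M_n)\subseteq W_A(T)$ and, by Bessel, $\|Tx_n\|_A=\|M_ne_1\|\le\|M_n\|\le\|T\|_A$. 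The entries are bounded, so a subsequence converges to a single matrix $M=\bigl(\begin{smallmatrix}\alpha&\gamma\\ \beta&\delta\end{smallmatrix}\bigr)$ with $|\alpha|=w(M)=w_A(T)$ and $\|Me_1\|=\|M\|=\|T\|_A$. Normalizing $\alpha\ge0$, the point $\alpha$ sits on the supporting line $\mathrm{Re}\,z=w(M)$ of $W(M)$, so $e_1$ is a top eigenvector of $\mathrm{Re}(M)$, which forces $\gamma=-\overline{\beta}=-\beta$; and since $e_1$ norms $M$ it is an eigenvector of $M^{*}M$, whose second component gives $\beta(\overline{\delta}-\alpha)=0$. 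If $\delta=\alpha$ then $M=\alpha I+\beta\bigl(\begin{smallmatrix}0&-1\\1&0\end{smallmatrix}\bigr)$ is normal with $w(M)=\sqrt{\alpha^2+\beta^2}$, contradicting $w(M)=\alpha$ unless $\beta=0$; either way $\beta=0$ and $\|T\|_A=\|M\|=\alpha=w_A(T)$. This replaces your open-ended phase optimization by two exact attainment identities and in particular disposes of precisely the $\gamma=-\beta$ configuration that blocks your first-order argument.
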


\begin{remark}
If $\mathcal{H}$ is finite-dimensional then condition (iii) of Theorem \ref{uppereql} is replaced by $M_T^{A}\cap W_T^{A}\neq \emptyset$, i.e., there exist a $A$-unit vector $x$ in $\mathcal{H}$ such that $\|Tx\|_A=\|T\|_A$ and $|\langle Tx,x\rangle _A|=w_A(T).$ 
\end{remark}

\noindent Next we find the equality condition of the first inequality in (\ref{1st}).

\begin{theorem}
Let $T\in \mathcal{B}_{A^{1/2}}({\mathcal{H}}).$ Then the following conditions are equivalent:\\
$(i)$ $dw_{A}(T)=w_{A}(T)$.\\
$(ii)$ $AT=0$.
\end{theorem}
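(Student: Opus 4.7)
The plan is to prove the two implications separately, with $(ii)\Rightarrow(i)$ being essentially immediate and $(i)\Rightarrow(ii)$ requiring a short but clever sequence argument.

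For the direction $(ii)\Rightarrow(i)$, I would observe that if $AT=O$ then for every $x\in\mathcal{H}$ we have $\|Tx\|_A^2=\langle ATx,Tx\rangle=0$ and $\langle Tx,x\rangle_A=\langle ATx,x\rangle=0$. Both quantities defining $dw_A(T)$ and $w_A(T)$ therefore vanish identically, so $dw_A(T)=w_A(T)=0$.

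For the direction $(i)\Rightarrow(ii)$, the key idea is to exploit the definition of $w_A(T)$ as a supremum. I would pick a sequence of $A$-unit vectors $\{x_n\}$ with $|\langle Tx_n,x_n\rangle_A|\to w_A(T)$. By the very definition of $dw_A(T)$,
\begin{equation*}
|\langle Tx_n,x_n\rangle_A|^2+\|Tx_n\|_A^4\;\le\;dw_A(T)^2\;=\;w_A(T)^2.
\end{equation*}
Rearranging gives $\|Tx_n\|_A^4\le w_A(T)^2-|\langle Tx_n,x_n\rangle_A|^2\to 0$, so $\|Tx_n\|_A\to 0$. Now the semi-Hilbertian Cauchy--Schwarz inequality yields $|\langle Tx_n,x_n\rangle_A|\le\|Tx_n\|_A\|x_n\|_A=\|Tx_n\|_A\to 0$, forcing $w_A(T)=0$. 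Consequently $\langle ATx,x\rangle=0$ for every $x\in\mathcal{H}$, and since $\mathcal{H}$ is a complex Hilbert space this implies $AT=O$ by the standard polarization identity.

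I do not anticipate a real obstacle here; the only subtle point is the choice of test sequence. One might be tempted to instead take a sequence achieving $\|Tx_n\|_A\to\|T\|_A$, but that path only yields $\|T\|_A^2\le w_A(T)\le\|T\|_A$ (hence $\|T\|_A\le1$), which is far weaker than $AT=O$. Selecting a sequence that approaches the $A$-numerical radius is the correct move because it directly squeezes the $\|Tx_n\|_A^4$ term against the gap $w_A(T)^2-|\langle Tx_n,x_n\rangle_A|^2$.
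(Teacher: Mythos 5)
Your proof is correct and follows essentially the same route as the paper: choose a sequence of $A$-unit vectors realizing $w_A(T)$, squeeze $\|Tx_n\|_A^4$ against $dw_A^2(T)=w_A^2(T)$ to force $\|Tx_n\|_A\to 0$, and conclude $w_A(T)=0$ via the Cauchy--Schwarz inequality for $\langle\cdot,\cdot\rangle_A$. Your version is marginally cleaner in that it avoids the paper's extraction of a convergent subsequence of $\{\|Tx_n\|_A\}$, and it spells out the final step $w_A(T)=0\Rightarrow AT=O$, which the paper leaves implicit.
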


\begin{proof}
The  part $(ii) \Rightarrow (i)$  follows trivially.  We only prove $ (i) \Rightarrow (ii).$ Since $T\in \mathcal{B}_{A^{1/2}}({\mathcal{H}}),$ there exists a sequence $\{x_n\}$ in $\mathcal{H}$ with $\|x_n\|_{A}=1$ such that $w_{A}(T)=\lim_{n\to \infty} |\langle Tx_n,x_n\rangle_{A}|.$ The sequence $ \{ \|Tx_n\|_{A}\} ,$ being a bounded sequence of real numbers has a convergent subsequence $\{ \|Tx_{n_k}\|_{A} \}. $ Now $w_{A}^2(T)=dw_{A}^2(T)\geq |\langle Tx_{n_k},x_{n_k}\rangle_{A}|^2+\|Tx_{n_k}\|_{A}^4.$ Taking limit on both sides, we get $w_{A}^2(T)=dw_{A}^2(T)\geq w_{A}^2(T)+ \lim_{k\to \infty}\|Tx_{n_k}\|_{A}^4.$ This implies that $\lim_{k\to \infty}\|Tx_{n_k}\|_{A}=0$. Therefore, it follows from Cauchy-Schwarz inequality that $w_{A}(T)=\lim_{k\to \infty} |\langle Tx_{n_k},x_{n_k}\rangle_{A}|\leq \lim_{k\to \infty} \|Tx_{n_k}\|_{A}=0.$ So, we get $w_{A}(T)=0$ and hence, $AT=0.$
\end{proof}

\begin{theorem}\label{th-equality2}
Let $T\in \mathcal{B}_{A^{1/2}}({\mathcal{H}})$ and $dw_{A}(T)=\|T\|_{A}^2.$ Then either of the following condition holds:\\
$(i)$ Let $M^A_{T}\neq \emptyset $. Then $|\langle Tx,x\rangle_{A}|=0$ if $x\in M^A_{T} $, i.e., $M^A_{T} \subseteq c^A_{T}.$\\
$(ii)$ Let $M^A_{T}= \emptyset $. Then there exists a sequence $\{x_n\}$ in $\mathcal{H}$ with $\|x_n\|_{A}=1$ such that $\lim_{n\to \infty}\|Tx_n\|_{A}=\|T\|_{A}$ and $\lim_{n\to \infty} |\langle Tx_n,x_n\rangle_{A}|=0.$ 
\end{theorem}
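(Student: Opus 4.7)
My plan is to handle the two cases separately, both by exploiting the same underlying inequality: for any $A$-unit vector $y\in\mathcal{H}$, the definition of $dw_A(T)$ as a supremum yields
\[
dw_A(T)^2 \;\geq\; |\langle Ty,y\rangle_A|^2 + \|Ty\|_A^4.
\]
The hypothesis $dw_A(T)=\|T\|_A^2$ turns this into the very tight bound $|\langle Ty,y\rangle_A|^2 \leq \|T\|_A^4 - \|Ty\|_A^4$, which is nonnegative and goes to zero precisely when $\|Ty\|_A$ approaches $\|T\|_A$. The whole proof essentially unpacks this observation.

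For case (i), I would fix an arbitrary $x\in M^A_T$, so $\|x\|_A=1$ and $\|Tx\|_A=\|T\|_A$. Substituting directly into the above display gives
\[
\|T\|_A^4 \;\geq\; |\langle Tx,x\rangle_A|^2 + \|T\|_A^4,
\]
forcing $|\langle Tx,x\rangle_A|=0$. Since this exhibits $0\in W_A(T)$, we have $c_A(T)=0$, so every such $x$ lies in $c^A_T$, giving $M^A_T\subseteq c^A_T$.

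For case (ii), even though the supremum defining $\|T\|_A$ is not attained, it is still a supremum, so I can choose a sequence $\{x_n\}$ with $\|x_n\|_A=1$ and $\|Tx_n\|_A\to\|T\|_A$. Plugging $x_n$ into the key inequality and using $dw_A(T)=\|T\|_A^2$ yields
\[
0 \;\leq\; |\langle Tx_n,x_n\rangle_A|^2 \;\leq\; \|T\|_A^4 - \|Tx_n\|_A^4 \;\longrightarrow\; 0,
\]
so $|\langle Tx_n,x_n\rangle_A|\to 0$, and the same sequence witnesses both required limits.

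I do not expect any genuine obstacle; the only subtle point is a cosmetic one in (i), namely recognizing that once a single vector realizes $|\langle Tx,x\rangle_A|=0$, the infimum $c_A(T)$ must equal $0$, which is what legitimizes the inclusion $M^A_T\subseteq c^A_T$ rather than just the pointwise vanishing statement. Everything else is a direct application of the supremum inequality under the standing hypothesis $dw_A(T)=\|T\|_A^2$, with no need for selfadjointness, Cauchy--Schwarz, or subsequence arguments.
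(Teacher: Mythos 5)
Your proof is correct and follows essentially the same route as the paper: both cases reduce to the inequality $dw_A^2(T)\geq |\langle Tx,x\rangle_A|^2+\|Tx\|_A^4$ applied to a norming vector or a norming sequence. Your treatment of case (ii) is in fact slightly cleaner than the paper's, since the squeeze bound $|\langle Tx_n,x_n\rangle_A|^2\leq \|T\|_A^4-\|Tx_n\|_A^4\to 0$ makes the paper's passage to a convergent subsequence unnecessary.
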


\begin{proof}
$(i)$ Let $M^A_{T}\neq \emptyset $ and $x\in M^A_{T}.$ So, $\|Tx\|_{A}^4=\|T\|_{A}^4=dw_{A}^2(T)\geq |\langle Tx,x\rangle_{A}|^2+\|Tx\|_{A}^4$. This implies that $|\langle Tx,x\rangle_{A}|=0.$ so $x\in c^A_{T}.$ Therefore, $M^A_{T} \subseteq c^A_{T}.$ \\
 $(ii)$ Let $M^A_{T}= \emptyset $. Since $T\in \mathcal{B}_{A^{1/2}}({\mathcal{H}}),$ there exists a sequence $\{x_n\}$ in $\mathcal{H}$ with $\|x_n\|_{A}=1$ such that $\|T\|_{A}=\lim_{n\to \infty} \| Tx_n \|_{A}.$ Since $ \{ |\langle Tx_n,x_n\rangle_{A}| \} $ is a bounded sequence of scalars, so it has a convergent subsequence $ \{ |\langle Tx_{n_k},x_{n_k}\rangle_{A}| \} .$ Now $\|T\|_{A}^4 =dw_{A}^2(T)\geq |\langle Tx_{n_k},x_{n_k}\rangle_{A}|^2+\|Tx_{n_k}\|_{A}^4.$ Taking limit on both sides, we get $\|T\|_{A}^4=dw_{A}^2(T)\geq \lim_{k\to \infty}|\langle Tx_{n_k},x_{n_k}\rangle_{A}|^2+ \|T\|_{A}^4$ and so,  $\lim_{k\to \infty}|\langle Tx_{n_k},x_{n_k}\rangle_{A}|=0.$ This completes the proof.
\end{proof}

\begin{remark}
We note  that the converse part of Theorem \ref{th-equality2} may not hold, (see \cite[Remark 2.3]{BBBP}.
\end{remark}

\noindent We next obtain lower bounds for the $A$-Davis-Wielandt radius of operators in $\mathcal{B}_{A}(\mathcal{H})$.

\begin{theorem}\label{th-lower1}
Let $T\in \mathcal{B}_{A}({\mathcal{H}}).$ Then
\begin{eqnarray*}
(i)~ dw_{A}^2(T) & \geq & \max \left\{ w_{A}^2(T)+c_{A}^2(|T|^2_A), \|T\|_{A}^4+c_{A}^2(T)\right\},\\
(ii)~dw_{A}^2(T) & \geq & 2\max \left\{w_{A}(T)c_{A}(|T|^2_A), c_{A}(T)\|T\|_{A}^2 \right\}.
\end{eqnarray*}
\end{theorem}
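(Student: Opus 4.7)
The whole argument rests on the trivial but essential inequality
\[
dw_A^2(T) \;\geq\; |\langle Tx,x\rangle_A|^2+\|Tx\|_A^4 \qquad \text{for every $x\in\mathcal{H}$ with $\|x\|_A=1$},
\]
combined with the identification
\[
\|Tx\|_A^2 \;=\; \langle T^{\sharp_A}Tx,x\rangle_A \;=\; \langle |T|_A^2 x,x\rangle_A,
\]
which realises $\|Tx\|_A^2$ as a (non-negative) point of $W_A(|T|_A^2)$; in particular $c_A(|T|_A^2)\leq \|Tx\|_A^2\leq \|T\|_A^2$ for every $A$-unit vector $x$. The whole proof is a matter of choosing the right $A$-unit sequence in the basic inequality and, where needed, an AM-GM step.

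For the first bound of $(i)$, $dw_A^2(T)\geq w_A^2(T)+c_A^2(|T|_A^2)$, I would choose an $A$-unit sequence $\{x_n\}$ with $|\langle Tx_n,x_n\rangle_A|\to w_A(T)$; the bounded sequence $\{\|Tx_n\|_A^2\}$ then has a convergent subsequence with limit $\ell\geq c_A(|T|_A^2)$, and taking the limit of the basic inequality along that subsequence yields the claim. For the second bound, $dw_A^2(T)\geq \|T\|_A^4+c_A^2(T)$, I would instead take an $A$-unit sequence $\{y_n\}$ with $\|Ty_n\|_A\to \|T\|_A$, extract a subsequence along which $|\langle Ty_n,y_n\rangle_A|$ converges to some $\mu\geq c_A(T)$, and again pass to the limit.

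For $(ii)$, I would start from the AM-GM step $a^2+b^2\geq 2ab$ applied to $a=|\langle Tx,x\rangle_A|$ and $b=\|Tx\|_A^2$. Using $b\geq c_A(|T|_A^2)$ and choosing $\{x_n\}$ with $|\langle Tx_n,x_n\rangle_A|\to w_A(T)$ gives
\[
dw_A^2(T)\;\geq\; 2|\langle Tx_n,x_n\rangle_A|\,\|Tx_n\|_A^2 \;\geq\; 2|\langle Tx_n,x_n\rangle_A|\,c_A(|T|_A^2)\;\longrightarrow\;2w_A(T)\,c_A(|T|_A^2).
\]
Symmetrically, using $a\geq c_A(T)$ and choosing $\{y_n\}$ with $\|Ty_n\|_A\to\|T\|_A$ gives $dw_A^2(T)\geq 2c_A(T)\|T\|_A^2$.

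There is no genuine obstacle here; the only care required is that $c_A(\cdot)$, $w_A(\cdot)$, $\|\cdot\|_A$ are defined via $\inf$/$\sup$ over a set that is not compact, so one cannot simply plug in extremal vectors and must instead argue through an approximating $A$-unit sequence together with a bounded-subsequence extraction, exactly as in the preceding Theorem~\ref{th-equality2}. The hypothesis $T\in \mathcal{B}_A(\mathcal{H})$ (rather than $\mathcal{B}_{A^{1/2}}(\mathcal{H})$) is used only to ensure that $T^{\sharp_A}$, and hence $|T|_A^2$, is defined.
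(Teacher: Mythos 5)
Your proposal is correct and follows essentially the same route as the paper: the pointwise inequality $dw_A^2(T)\geq |\langle Tx,x\rangle_A|^2+\|Tx\|_A^4$, the identification $\|Tx\|_A^2=\langle |T|_A^2x,x\rangle_A\geq c_A(|T|_A^2)$ (resp. $|\langle Tx,x\rangle_A|\geq c_A(T)$), a supremum over $A$-unit vectors for the remaining term, and AM--GM for part (ii). The only difference is cosmetic: the paper takes the supremum of the surviving term directly after the pointwise Crawford-number bound, whereas you route the same step through approximating sequences and subsequence extraction, which is harmless but not needed here.
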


\begin{proof}
$ (i) $ Let $x$ be a $A$-unit vector in $\mathcal{H}$. Then from the definition of $dw_{A}(T)$, we get
\begin{eqnarray*}
dw_{A}^2(T) &\geq & |\langle Tx,x \rangle_{A} |^2+\|Tx\|_{A}^4\\
 &= & |\langle Tx,x \rangle _{A}|^2+\langle |T|^2_Ax,x\rangle_{A} ^2 \\
&\geq & |\langle Tx,x \rangle _{A}|^2 + c_{A}^2(|T|^2_A).
\end{eqnarray*}
Therefore, taking supremum over all $A$-unit vectors in $\mathcal{H}$, we have
\[dw_{A}^2(T)\geq  w_{A}^2(T)+c_{A}^2(|T|^2_A).\]

Again from $dw_{A}^2(T) \geq  |\langle Tx,x \rangle _{A}|^2+\|Tx\|_{A}^4,$ where $\|x\|_{A}=1$, we get
\[dw_{A}^2(T) \geq  c_{A}^2(T)+\|Tx\|_{A}^4.\]
Taking supremum over all $A$-unit vectors in $\mathcal{H}$, we have
\[dw_{A}^2(T) \geq  c_{A}^2(T)+\|T\|_{A}^4.\]
This completes the proof of $(i).$\\
$(ii)$  	For all  $x\in \mathcal{H}$ with $\|x\|_{A}=1,$ we have  $$|\langle Tx,x \rangle _{A}|^2+\|Tx\|_{A}^4\geq 2|\langle Tx,x \rangle _{A}| \|Tx\|_{A}^2$$ and  so, 
$$ dw_{A}^2(T)\geq 2|\langle Tx,x \rangle _{A}| \langle |T|^2_Ax,x \rangle _{A}\geq 2|\langle Tx,x \rangle _{A}| c_{A}(|T|^2_A).$$

Taking supremum over all $A$-unit vectors in $\mathcal{H}$, we get 
\[dw_{A}^2(T)\geq 2w_{A}(T)c_{A}(|T|^2_A).\]

Again from $|\langle Tx,x \rangle _{A}|^2+\|Tx\|_{A}^4\geq 2|\langle Tx,x \rangle _{A}| \|Tx\|_{A}^2$, we have
\[dw_{A}^2(T)\geq 2c_{A}(T) \|Tx\|_{A}^2.\]
Taking supremum over all $A$-unit vectors in $\mathcal{H}$, we get 
\[dw_{A}^2(T)\geq 2c_{A}(T)\|T\|_{A}^2.\]
This completes the proof.
\end{proof}

\begin{remark}
(i) It is easy to observe that the lower bound of the $A$-Davis-Wielandt radius of $T\in \mathcal{B}_{A}({\mathcal{H}})$  obtained in Theorem \ref{th-lower1} (i) is sharper than that in (\ref{1st}). \\
(ii) Also, both the inequalities in \cite[Th. 2.4]{BBBP} follow from Theorem \ref{th-lower1} by considering $A=I$.
\end{remark}

\noindent In the following theorem we obtain an upper bound for the $A$-Davis-Wielandt radius of operators in $\mathcal{B}_{A}(\mathcal{H})$.

\begin{theorem}\label{th-upper2}
Let $T\in \mathcal{B}_{A}({\mathcal{H}}).$ Then 
\[dw_{A}^2(T)\leq \sup_{\theta \in \mathbb{R}}w_{A}^2(e^{{\rm i} \theta}T+|T|^2_A)-2c_{A}(T)m_{A}^2(T).\]
\end{theorem}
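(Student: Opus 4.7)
The plan is to reduce the inequality to a pointwise estimate for fixed $A$-unit $x$, using the elementary algebraic identity
\[|\alpha|^2 + \beta^2 = |e^{{\rm i}\theta}\alpha + \beta|^2 - 2\beta\,{\rm Re}(e^{{\rm i}\theta}\alpha),\]
valid for any $\alpha \in \mathbb{C}$, $\beta \in \mathbb{R}$, and $\theta \in \mathbb{R}$ (this is just the expansion of $|e^{{\rm i}\theta}\alpha + \beta|^2$, noting that $\beta$ is real). The identity is useful because the right-hand side resembles the $A$-numerical radius of $e^{{\rm i}\theta}T + |T|^2_A$ plus a correction that we can control by $c_A(T)$ and $m_A(T)$.

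First I would fix an $A$-unit vector $x \in \mathcal{H}$ and substitute $\alpha = \langle Tx, x\rangle_A$ and $\beta = \|Tx\|_A^2$; the latter equals $\langle |T|^2_A x, x\rangle_A$ since $T \in \mathcal{B}_A(\mathcal{H})$, so $\beta$ is nonnegative real. Choosing $\theta = \theta_x$ such that $e^{{\rm i}\theta_x}\alpha = |\alpha|$ (any $\theta$ if $\alpha = 0$), the identity reads
\[|\langle Tx, x\rangle_A|^2 + \|Tx\|_A^4 = \bigl|\langle (e^{{\rm i}\theta_x}T + |T|^2_A)x, x\rangle_A\bigr|^2 - 2\,\|Tx\|_A^2\,|\langle Tx, x\rangle_A|.\]

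Next I would bound each piece separately. The first term is at most $w_A^2(e^{{\rm i}\theta_x}T + |T|^2_A)$ by definition of the $A$-numerical radius, and hence at most $\sup_{\theta \in \mathbb{R}} w_A^2(e^{{\rm i}\theta}T + |T|^2_A)$. For the subtracted term, $\|Tx\|_A \geq m_A(T)$ and $|\langle Tx, x\rangle_A| \geq c_A(T)$ (both by the infimum definitions over $A$-unit vectors), so
\[-2\,\|Tx\|_A^2\,|\langle Tx, x\rangle_A| \leq -2\,m_A^2(T)\,c_A(T).\]
Combining and taking the supremum over all $A$-unit $x$ yields the asserted inequality.

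There is no serious obstacle; the only delicate point is that $\theta_x$ depends on $x$, but this is harmless because we immediately upper-bound $w_A^2(e^{{\rm i}\theta_x}T + |T|^2_A)$ by the supremum over all $\theta$, which frees the estimate from the $x$-dependence before taking the supremum over $x$.
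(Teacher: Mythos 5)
Your proof is correct and is essentially the same as the paper's: both rewrite $|\langle Tx,x\rangle_A|^2+\|Tx\|_A^4$ via the identity $a^2+b^2=(a+b)^2-2ab$ with a rotation $e^{{\rm i}\theta_x}$ making $\langle Tx,x\rangle_A$ nonnegative, bound the square term by $\sup_\theta w_A^2(e^{{\rm i}\theta}T+|T|^2_A)$, and bound the cross term below by $c_A(T)m_A^2(T)$ before taking the supremum over $A$-unit vectors. No further comment is needed.
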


\begin{proof}
Let $x\in \mathcal{H}$ with $\|x\|_{A}=1.$ Then there exists $\theta \in \mathbb{R}$ such that $|\langle Tx,x \rangle _{A}|=e^{{\rm i} \theta}\langle Tx,x\rangle _{A}$. Now,
\begin{eqnarray*}
&& |\langle Tx,x \rangle _{A}|^2+\|Tx\|_{A}^4 \\
=&& \langle e^{{\rm i} \theta}Tx,x\rangle _{A}^2+ \langle |T|^2_Ax,x\rangle _{A} ^2 \\
=&& (\langle e^{{\rm i} \theta}Tx,x\rangle _{A}+ \langle |T|^2_Ax,x\rangle _{A})^2-2\langle e^{{\rm i} \theta}Tx,x\rangle _{A}\langle |T|^2_Ax,x\rangle _{A}.
\end{eqnarray*}
Hence, \begin{eqnarray*}
2\langle e^{{\rm i} \theta}Tx,x\rangle _{A}\langle |T|^2_Ax,x\rangle _{A}+|\langle Tx,x \rangle _{A}|^2+\|Tx\|_{A}^4 &=&\left (\langle e^{{\rm i} \theta}Tx,x\rangle _{A}+ \langle |T|^2_Ax,x\rangle _{A}\right)^2\\
\Rightarrow 2\langle e^{{\rm i} \theta}Tx,x\rangle _{A}\langle |T|^2_Ax,x\rangle _{A}+|\langle Tx,x \rangle _{A}|^2+\|Tx\|_{A}^4&=& \langle (e^{{\rm i} \theta}T+  |T|^2_A)x,x\rangle _{A}^2\\
\Rightarrow 2|\langle Tx,x\rangle_{A}| \langle |T|^2_Ax,x\rangle _{A}+|\langle Tx,x \rangle _{A}|^2+\|Tx\|_{A}^4&\leq& w_{A}^2(e^{{\rm i} \theta}T+ |T|^2_A).
\end{eqnarray*}
Therefore, $$2|\langle Tx,x\rangle_{A} |~~\langle |T|^2_Ax,x\rangle_{A} +|\langle Tx,x \rangle_{A} |^2+\|Tx\|_{A}^4 \leq \sup_{\theta \in \mathbb{R}} w_{A}^2(e^{{\rm i} \theta}T+ |T|^2_A)$$ and so,
\[2c_{A}(T)m_{A}^2(T)+|\langle Tx,x \rangle _{A}|^2+\|Tx\|_{A}^4 \leq \sup_{\theta \in \mathbb{R}} w_{A}^2(e^{{\rm i} \theta}T+ |T|^2_A).\]
Hence, taking supremum over all $A$-unit vectors in $\mathcal{H}$, we get 
\[2c_{A}(T)m_{A}^2(T)+dw_{A}^2(T) \leq \sup_{\theta \in \mathbb{R}} w_{A}^2(e^{{\rm i} \theta}T+ |T|^2_A).\]
\[ \Rightarrow dw_{A}^2(T)\leq \sup_{\theta \in \mathbb{R}}w_{A}^2(e^{{\rm i} \theta}T+|T|^2_A)-2c_{A}(T)m_{A}^2(T).\]
\end{proof}

\begin{remark}
We would like to note that the inequality in \cite[Th. 2.6]{BBBP} follows from Theorem \ref{th-upper2} by considering $A=I$.
\end{remark}

\noindent Next we obtain the following upper and lower bounds for the $A$-Davis-Wielandt radius of operators in $\mathcal{B}_{A}(\mathcal{H})$.

\begin{theorem}\label{th-upperlower4}
Let $T\in \mathcal{B}_{A}(\mathcal{H}).$  Then 
	\begin{eqnarray*}
\frac{1}{2} \left \{w_{A}^2(T+|T|^2_A)+ c_{A}^2(T-|T|^2_A)\right\}&\leq& dw_{A}^2(T) \\
&\leq & \frac{1}{2} \left \{w_{A}^2(T+|T|^2_A)+ w_{A}^2(T-|T|^2_A)\right \}.  
	\end{eqnarray*}
	\end{theorem}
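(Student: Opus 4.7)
The plan is to reduce everything to a single pointwise identity and then feed it to the definitions of $dw_A$, $w_A$ and $c_A$. First, for any $A$-unit vector $x\in\mathcal{H}$, note that because $T\in\mathcal{B}_{A}(\mathcal{H})$ we have $|T|_A^2=T^{\sharp_A}T$ well defined and
$$\langle |T|_A^2x,x\rangle_A=\langle T^{\sharp_A}Tx,x\rangle_A=\langle Tx,Tx\rangle_A=\|Tx\|_A^2,$$
which is a nonnegative real number. Applying the parallelogram identity $|a+b|^2+|a-b|^2=2(|a|^2+|b|^2)$ to the complex scalar $a=\langle Tx,x\rangle_A$ and the real scalar $b=\|Tx\|_A^2$, I get the pointwise identity
$$|\langle Tx,x\rangle_A|^2+\|Tx\|_A^4=\tfrac12\bigl[\,|\langle (T+|T|_A^2)x,x\rangle_A|^2+|\langle (T-|T|_A^2)x,x\rangle_A|^2\,\bigr]. \qquad(\ast)$$

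For the upper bound, I bound each term on the right of $(\ast)$ by the corresponding $A$-numerical radius, that is $|\langle (T\pm|T|_A^2)x,x\rangle_A|^2\le w_A^2(T\pm|T|_A^2)$, and then take the supremum over $A$-unit vectors $x$ on the left. Since the right-hand estimate no longer depends on $x$, this immediately yields
$$dw_A^2(T)\le \tfrac12\bigl\{w_A^2(T+|T|_A^2)+w_A^2(T-|T|_A^2)\bigr\}.$$

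For the lower bound, I keep the first term on the right of $(\ast)$ as is and bound the second term below by the constant $c_A^2(T-|T|_A^2)$, which does not depend on $x$. This gives, for every $A$-unit vector $x$,
$$dw_A^2(T)\ge |\langle Tx,x\rangle_A|^2+\|Tx\|_A^4\ge \tfrac12|\langle (T+|T|_A^2)x,x\rangle_A|^2+\tfrac12\,c_A^2(T-|T|_A^2).$$
Because the second summand on the right is a constant, I can take the supremum over $A$-unit $x$ on the right to promote the first summand to $\tfrac12 w_A^2(T+|T|_A^2)$, obtaining the desired lower bound.

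I do not expect any genuine obstacle: the only slightly delicate point is observing that the parallelogram identity may be applied with one of the two scalars taken to be real (here $b=\|Tx\|_A^2\ge 0$), which is what allows the quantities $\langle Tx,x\rangle_A\pm\|Tx\|_A^2$ to be recognized as the numerical-range values of $T\pm|T|_A^2$ at $x$. Once $(\ast)$ is in hand, both directions follow from monotonicity of suprema and the fact that one of the two terms is $x$-independent in the lower-bound estimate.
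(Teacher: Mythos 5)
Your proof is correct and is essentially the paper's own argument: the authors likewise write $|\langle Tx,x\rangle_A|^2+\|Tx\|_A^4=\tfrac12|\langle (T+|T|_A^2)x,x\rangle_A|^2+\tfrac12|\langle (T-|T|_A^2)x,x\rangle_A|^2$ via the scalar parallelogram identity together with $\|Tx\|_A^2=\langle |T|_A^2x,x\rangle_A$, then bound the two terms by $w_A^2$ for the upper estimate and keep one term while replacing the other by $c_A^2(T-|T|_A^2)$ for the lower one. The only cosmetic difference is your remark that one scalar is real, which is not actually needed since the parallelogram law holds for arbitrary complex scalars.
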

	
	\begin{proof}
	Let $x\in \mathcal{H}$ with $\|x\|_{A}=1.$ Then
	\begin{eqnarray*}
	|\langle Tx,x \rangle _{A}|^2+\|Tx\|_{A}^4 &=& \frac{1}{2}\left |\langle Tx,x \rangle _{A}+\langle Tx, Tx\rangle _{A} \right|^2+ \frac{1}{2}\left|\langle Tx,x \rangle _{A}-\langle Tx, Tx\rangle _{A} \right|^2\\
	&=& \frac{1}{2}\left|\langle Tx,x \rangle _{A}+\langle |T|^2_Ax, x\rangle  _{A}\right|^2+ \frac{1}{2}\left|\langle Tx,x \rangle _{A}-\langle |T|^2_Ax, x\rangle _{A} \right|^2\\
	&=& \frac{1}{2}\left|\langle (T+ |T|^2_A)x, x\rangle  _{A}\right|^2+ \frac{1}{2}\left|\langle (T- |T|^2_A)x, x\rangle  _{A}\right|^2\\
	&\geq& \frac{1}{2} \left \{\left|\langle (T+ |T|^2_A)x, x\rangle  _{A}\right|^2+ c_{A}^2(T-|T|^2_A)\right\}.
\end{eqnarray*}
Therefore, taking supremum over all $A$-unit vectors in $\mathcal{H}$, we get
	\[dw_{A}^2(T) \geq  \frac{1}{2} \left \{w_{A}^2(T+|T|^2_A)+ c_{A}^2(T-|T|^2_A)\right\}.\]
Again,
\begin{eqnarray*}
	|\langle Tx,x \rangle _{A}|^2+\|Tx\|_{A}^4 &=& \frac{1}{2}\left |\langle Tx,x \rangle _{A}+\langle Tx, Tx\rangle _{A} \right|^2+ \frac{1}{2}\left|\langle Tx,x \rangle _{A}-\langle Tx, Tx\rangle _{A} \right|^2\\
	&=& \frac{1}{2}\left|\langle Tx,x \rangle _{A}+\langle |T|^2_Ax, x\rangle  _{A}\right|^2+ \frac{1}{2}\left|\langle Tx,x \rangle _{A}-\langle |T|^2_Ax, x\rangle _{A} \right|^2\\
	&=& \frac{1}{2}\left|\langle (T+ |T|^2_A)x, x\rangle  _{A}\right|^2+ \frac{1}{2}\left|\langle (T- |T|^2_A)x, x\rangle  _{A}\right|^2\\
	&\leq& \frac{1}{2} \left \{w_{A}^2(T+|T|^2_A)+ w_{A}^2(T-|T|^2_A)\right\}.
\end{eqnarray*}
Therefore, taking supremum over all $A$-unit vectors in $\mathcal{H}$, we get
	\[dw_{A}^2(T) \leq  \frac{1}{2} \left \{w_{A}^2(T+|T|^2_A)+ w_{A}^2(T-|T|^2_A)\right\}.\] Hence completes the proof.
\end{proof}

\begin{remark}
We would like to remark that the inequality obtained in Theorem \ref{th-upperlower4} is generalizes the inequality in \cite[Th. 2.8]{BBBP}.
\end{remark}

\noindent In the next theorem we obtain upper bounds for the $A$-Davis-Wielandt radius of $T\in \mathcal{B}_{A}({\mathcal{H}})$. First we need the following lemma.

\begin{lemma}\label{lem13}
Let $x,y,e \in \mathcal{H}$ with $\|e\|_{A}=1$. Then 
\[|\langle x,e\rangle _{A} \langle e,y \rangle_{A}|\leq \frac{1}{2}\left (|\langle x,y \rangle _{A}|+\|x\|_{A}\|y\|_{A} \right ).\]
\end{lemma}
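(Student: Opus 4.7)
This is the Buzano inequality transported from the ordinary Hilbert-space setting to the $A$-semi-inner product $\langle\cdot,\cdot\rangle_A$. The classical Hilbert-space proof of Buzano's inequality goes through essentially unchanged here, because the only tools used are bilinearity, conjugate symmetry, and the Cauchy--Schwarz inequality, all of which hold for any positive semidefinite sesquilinear form. In particular, we have the $A$-Cauchy--Schwarz inequality $|\langle u,v\rangle_A|\leq \|u\|_A\|v\|_A$ for all $u,v\in\mathcal{H}$.

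The central trick is to apply $A$-Cauchy--Schwarz to a cleverly chosen pair of vectors. Specifically, I would introduce
\[
z:=x-2\langle x,e\rangle_A\, e.
\]
Expanding $\|z\|_A^2=\langle z,z\rangle_A$ using $\|e\|_A=1$ and $\overline{\langle x,e\rangle_A}=\langle e,x\rangle_A$, the cross terms yield $-4|\langle x,e\rangle_A|^2$ while the square term yields $+4|\langle x,e\rangle_A|^2$, so everything collapses and one gets $\|z\|_A=\|x\|_A$.

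With that identity in hand, $A$-Cauchy--Schwarz applied to $\langle z,y\rangle_A$ together with the bilinear expansion $\langle z,y\rangle_A = \langle x,y\rangle_A - 2\langle x,e\rangle_A\langle e,y\rangle_A$ gives
\[
\bigl|\langle x,y\rangle_A - 2\langle x,e\rangle_A\langle e,y\rangle_A\bigr|\ \leq\ \|x\|_A\,\|y\|_A.
\]
The proof is then finished by the triangle inequality in the form
\[
2\bigl|\langle x,e\rangle_A\langle e,y\rangle_A\bigr| \ \leq\ \bigl|\langle x,y\rangle_A - 2\langle x,e\rangle_A\langle e,y\rangle_A\bigr| + \bigl|\langle x,y\rangle_A\bigr|,
\]
after which dividing by $2$ yields the stated inequality.

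I do not expect any serious obstacle: the whole argument is a short exercise in expanding $A$-norms and invoking Cauchy--Schwarz. The only mild subtlety is that $\|\cdot\|_A$ is a seminorm rather than a norm (its kernel is $\mathcal{N}(A)$), but since Cauchy--Schwarz is valid for any positive semidefinite sesquilinear form, this plays no role. If one prefers an operator-theoretic formulation, the identity $\|z\|_A=\|x\|_A$ is equivalent to saying that the rank-one $A$-selfadjoint operator $Tz=\langle z,e\rangle_A e$ satisfies $\|T-\tfrac{1}{2}I\|_A=\tfrac{1}{2}$, which is the clean conceptual reason underlying the inequality.
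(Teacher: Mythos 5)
Your proof is correct, and it takes a genuinely different route from the paper's. You use the reflection trick: setting $z=x-2\langle x,e\rangle_A e$ and checking that $\|z\|_A=\|x\|_A$ (the cross terms $-4|\langle x,e\rangle_A|^2$ cancel against the square term $+4|\langle x,e\rangle_A|^2$ because $\langle e,x\rangle_A=\overline{\langle x,e\rangle_A}$ and $\|e\|_A=1$), then a single application of the $A$-Cauchy--Schwarz inequality to $\langle z,y\rangle_A$ plus the triangle inequality. The paper instead applies Cauchy--Schwarz to the pair $x-\langle x,e\rangle_A e$ and $y-\langle y,e\rangle_A e$, obtaining
\[
\bigl|\langle x,y\rangle_A-\langle x,e\rangle_A\langle e,y\rangle_A\bigr|^2\leq\bigl(\|x\|_A^2-|\langle x,e\rangle_A|^2\bigr)\bigl(\|y\|_A^2-|\langle y,e\rangle_A|^2\bigr),
\]
and then invokes the elementary real inequality $(ac-bd)^2\geq(a^2-b^2)(c^2-d^2)$ to dominate the right-hand side by $\bigl(\|x\|_A\|y\|_A-|\langle x,e\rangle_A||\langle y,e\rangle_A|\bigr)^2$ before finishing with the triangle inequality. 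Your argument is shorter and avoids the auxiliary numerical lemma; the paper's argument passes through a sharper intermediate refinement of Cauchy--Schwarz, though that extra strength is not needed for the stated conclusion, and your version equally supports the subsequent Lemma on $\|x\|_A^2\|y\|_A^2-|\langle x,y\rangle_A|^2$, which is derived from the statement of this lemma rather than from its proof. Both arguments use only the positive semidefiniteness of the sesquilinear form, so the seminorm issue you flag is indeed harmless in either case.
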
	

\begin{proof}
For all $a,b,c,d \in \mathbb{R},$ we have $(ac-bd)^2 \geq (a^2-b^2)(c^2-d^2)$. Using this and the Cauchy Schwarz inequality, we get
\begin{eqnarray*}
|\left \langle x-\langle x,e \rangle_{A}e,y -\langle y,e \rangle_{A}e \right \rangle _{A} |^2 &\leq& \|x-\langle x,e \rangle_{A}e\|_{A}^2\|y -\langle y,e \rangle_{A}e\|_{A}^2\\
\implies |\langle x,y \rangle_{A}-\langle x,e \rangle_{A}\langle e,y \rangle_{A}|^2 &\leq& (\|x\|_{A}^2-|\langle x,e \rangle_{A}|^2)(\|y\|_{A}^2-|\langle y,e \rangle_{A}|^2)\\
\implies |\langle x,y \rangle_{A}-\langle x,e \rangle_{A}\langle e,y \rangle_{A}|^2 &\leq& (\|x\|_{A}\|y\|_{A}-|\langle x,e \rangle_{A}||\langle y,e \rangle_{A}|)^2.
\end{eqnarray*}
\noindent Since $|\langle x,e \rangle_{A}| \leq \|x\|_{A}$ and $|\langle y,e \rangle_{A}| \leq \|y\|_{A}$, so $(\|x\|_{A}\|y\|_{A}-|\langle x,e \rangle_{A}||\langle y,e \rangle_{A}|) \geq 0.$ Therefore,  
\begin{eqnarray*}
|\langle x,y \rangle_{A}-\langle x,e \rangle_{A}\langle e,y \rangle_{A}| &\leq& \|x\|_{A}\|y\|_{A}-|\langle x,e \rangle_{A}||\langle y,e \rangle_{A}|\\
\implies |\langle x,e \rangle_{A}\langle e,y \rangle_{A}|-|\langle x,y \rangle_{A}| &\leq& \|x\|_{A}\|y\|_{A}-|\langle x,e \rangle_{A}||\langle e,y \rangle_{A}|.
\end{eqnarray*}
Hence, \[ 2|\langle x,e \rangle_{A}\langle e,y \rangle_{A}| \leq  |\langle x,y \rangle_{A}|+\|x\|_{A}\|y\|_{A}.\]
This completes the proof of the lemma.
\end{proof}
\begin{theorem}\label{upper15}
Let $T\in \mathcal{B}_{A}(\mathcal{H}).$ Then the following inequalities hold:
\begin{eqnarray*}
(i) && dw_{A}^2(T)\leq \left \||T|_{A}^2+{(|T|_{A}^2)}^{\sharp_A}|T|_{A}^2 \right \|_A,\\
(ii) && dw_{A}^2(T)\leq \frac{1}{2}\left(w_{A}(T^2)+\|T\|_{A}^2 \right)+\left \| T \right\|_{A}^4.
\end{eqnarray*}
\end{theorem}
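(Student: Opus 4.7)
The plan is to treat the two inequalities separately, each starting from the pointwise definition $|\langle Tx,x\rangle_A|^2+\|Tx\|_A^4$ for an $A$-unit vector $x$, and each leaning on a Cauchy--Schwarz type inequality in the semi-Hilbertian setting.

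For part (i), my approach is to apply the semi-inner-product Cauchy--Schwarz inequality twice. First, for any $x$ with $\|x\|_A=1$,
\[
|\langle Tx,x\rangle_A|^2 \le \|Tx\|_A^2\,\|x\|_A^2 = \langle |T|_A^2 x, x\rangle_A.
\]
Second, since $\|Tx\|_A^2=\langle |T|_A^2 x,x\rangle_A$ is a nonnegative real, squaring and again using Cauchy--Schwarz gives
\[
\|Tx\|_A^4 = \langle |T|_A^2 x, x\rangle_A^2 \le \||T|_A^2 x\|_A^2 = \langle (|T|_A^2)^{\sharp_A}|T|_A^2 x, x\rangle_A.
\]
Adding these and taking supremum over $A$-unit vectors yields $dw_A^2(T)\le w_A\bigl(|T|_A^2+(|T|_A^2)^{\sharp_A}|T|_A^2\bigr)$. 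To conclude, I would note that $|T|_A^2$ is $A$-selfadjoint (stated in the preliminaries) and $(|T|_A^2)^{\sharp_A}|T|_A^2$ is of the form $S^{\sharp_A}S$, hence $A$-positive and $A$-selfadjoint; therefore their sum is $A$-selfadjoint, and for such operators the $A$-numerical radius coincides with the $A$-operator seminorm.

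For part (ii), the plan is to feed Lemma \ref{lem13} a carefully chosen triple. For an $A$-unit vector $z$, I would set $x=Tz$, $y=T^{\sharp_A}z$, $e=z$. The defining property of the $A$-adjoint gives $\langle Tz,z\rangle_A=\langle z,T^{\sharp_A}z\rangle_A$, so the left-hand side of Lemma \ref{lem13} collapses to $|\langle Tz,z\rangle_A|^2$. A short computation using $AT^{\sharp_A}=T^{*}A$ (hence $(T^{\sharp_A})^{*}A=AT$) identifies $\langle x,y\rangle_A=\langle Tz,T^{\sharp_A}z\rangle_A=\langle T^{2}z,z\rangle_A$. Bounding $|\langle T^{2}z,z\rangle_A|\le w_A(T^2)$ and $\|Tz\|_A\|T^{\sharp_A}z\|_A\le \|T\|_A\|T^{\sharp_A}\|_A=\|T\|_A^2$ yields
\[
|\langle Tz,z\rangle_A|^2 \le \tfrac{1}{2}\bigl(w_A(T^2)+\|T\|_A^2\bigr).
\]
Adding the trivial estimate $\|Tz\|_A^4\le \|T\|_A^4$ and taking supremum over $A$-unit vectors finishes the proof.

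Neither part is deep; the main obstacle, such as it is, is spotting the right substitution in Lemma \ref{lem13} for part (ii) and carefully using the semi-Hilbertian identities $\langle Tz,z\rangle_A=\langle z,T^{\sharp_A}z\rangle_A$ and $\|T^{\sharp_A}\|_A=\|T\|_A$ to convert the raw Lemma output into a bound purely in terms of $w_A(T^2)$ and $\|T\|_A$. After that, both proofs reduce to applying Cauchy--Schwarz in the appropriate semi-inner-product form and then taking suprema.
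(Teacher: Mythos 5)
Your proposal is correct and follows essentially the same route as the paper: part (ii) is identical (Lemma \ref{lem13} with $x=Tz$, $y=T^{\sharp_A}z$, $e=z$, together with $\langle Tz,T^{\sharp_A}z\rangle_A=\langle T^2z,z\rangle_A$ and $\|T^{\sharp_A}\|_A=\|T\|_A$), while in part (i) you invoke Cauchy--Schwarz directly where the paper applies Lemma \ref{lem13} with equal arguments, which reduces to the same estimate. Your explicit remark that the resulting operator is $A$-positive, hence $A$-selfadjoint, so that $w_A$ equals $\|\cdot\|_A$, is a point the paper leaves implicit.
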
	
\begin{proof}
Let $x\in \mathcal{H}$ with $\|x\|_{A}=1$. Then  using Lemma \ref{lem13} we get, 
\begin{eqnarray*}
	|\langle Tx,x \rangle _{A}|^2+\|Tx\|_{A}^4 & = & |\langle Tx,x \rangle _{A} \langle x,Tx \rangle _{A}| + \langle |T|_{A}^2x,x \rangle _{A} \langle x,|T|_{A}^2x\rangle _{A}\\ 
	&\leq & \frac{1}{2}(\|Tx\|_{A}^2+ \langle Tx,Tx \rangle _{A}) + \frac{1}{2}(\||T|_{A}^2x\|_{A}^2+\langle |T|_{A}^2x,|T|_{A}^2x \rangle _{A}) \\
	&= & \langle |T|_{A}^2x,x \rangle _{A} +\langle {(|T|_{A}^2)}^{\sharp_A}|T|_{A}^2x,x \rangle _{A}\\
	&= & \langle (|T|_{A}^2+{(|T|_{A}^2)}^{\sharp_A}|T|_{A}^2)x,x \rangle_{A}.
\end{eqnarray*}
Therefore, taking supremum over all $A$-unit vectors in $\mathcal{H}$, we get the inequality (i). Again considering $|\langle Tx,x \rangle _{A}|^2 = |\langle Tx,x \rangle_{A} \langle x,T^{\sharp_A}x \rangle _{A}|$ and then using Lemma \ref{lem13}, we get the inequality (ii).
\end{proof}

\begin{remark}	
It is well-known that if $T$ is $A$-normaloid then $\|T^2\|_A=\|T\|_A^2$. Therefore, it is easy to observe that both the inequalities in Theorem \ref{upper15} becomes equality if $T$ is $A$-normaloid.
\end{remark}

\noindent In the next theorem we obtain an upper bound for the $A$-Davis-Wielandt radius of operators in $\mathcal{B}_A(\mathcal{H})$. For this we need the following lemma which  follows from Lemma \ref{lem13}. 
\begin{lemma}\label{lem14}
Let $x,y,e \in \mathcal{H}$ with $\|e\|_{A}=1$. Then
\[\|x\|_{A}^2\|y\|_{A}^2-|\langle x,y \rangle _{A}|^2 \geq 2|\langle x,e \rangle _{A}\langle e,y \rangle _{A}|\left (\|x\|_{A}\|y\|_{A}-|\langle x,y \rangle _{A}|\right).\]
\end{lemma}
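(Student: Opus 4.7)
The plan is to reduce Lemma \ref{lem14} to Lemma \ref{lem13} by a simple algebraic factorization, exploiting that the Cauchy--Schwarz inequality guarantees a nonnegative factor that can be cancelled or multiplied through.

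First I would observe that the left-hand side $\|x\|_{A}^{2}\|y\|_{A}^{2} - |\langle x,y\rangle_{A}|^{2}$ factors as a difference of squares:
\[
\|x\|_{A}^{2}\|y\|_{A}^{2} - |\langle x,y\rangle_{A}|^{2} = \bigl(\|x\|_{A}\|y\|_{A} - |\langle x,y\rangle_{A}|\bigr)\bigl(\|x\|_{A}\|y\|_{A} + |\langle x,y\rangle_{A}|\bigr).
\]
The Cauchy--Schwarz inequality in the semi-inner product $\langle\cdot,\cdot\rangle_{A}$ (which holds even though $\|\cdot\|_{A}$ is only a seminorm) yields $\|x\|_{A}\|y\|_{A} - |\langle x,y\rangle_{A}| \geq 0$, so the first factor is nonnegative and the second is at least as large as the first.

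Next I would invoke Lemma \ref{lem13} in the form
\[
\|x\|_{A}\|y\|_{A} + |\langle x,y\rangle_{A}| \geq 2|\langle x,e\rangle_{A}\langle e,y\rangle_{A}|,
\]
and multiply both sides of this inequality by the nonnegative quantity $\|x\|_{A}\|y\|_{A} - |\langle x,y\rangle_{A}|$. Combining with the factorization above gives exactly
\[
\|x\|_{A}^{2}\|y\|_{A}^{2} - |\langle x,y\rangle_{A}|^{2} \geq 2|\langle x,e\rangle_{A}\langle e,y\rangle_{A}|\bigl(\|x\|_{A}\|y\|_{A} - |\langle x,y\rangle_{A}|\bigr),
\]
which is the desired inequality.

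There is essentially no obstacle here; the only subtlety is noticing that multiplication by the factor $\|x\|_{A}\|y\|_{A} - |\langle x,y\rangle_{A}|$ preserves the inequality only because this factor is nonnegative, which is why invoking Cauchy--Schwarz once at the start is essential. The case of equality in Cauchy--Schwarz (when this factor vanishes) causes both sides of the claimed inequality to be zero, so the statement holds trivially in that boundary case.
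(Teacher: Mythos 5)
Your proof is correct and is essentially the argument the paper intends: the paper gives no details, stating only that the lemma ``follows from Lemma \ref{lem13},'' and your factorization of the left-hand side as $\bigl(\|x\|_{A}\|y\|_{A}-|\langle x,y\rangle_{A}|\bigr)\bigl(\|x\|_{A}\|y\|_{A}+|\langle x,y\rangle_{A}|\bigr)$ combined with Lemma \ref{lem13} and the Cauchy--Schwarz inequality for the semi-inner product is the natural and correct way to carry that out. The attention you pay to the nonnegativity of the factor being multiplied through is exactly the right point to check.
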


\begin{theorem}\label{upper17}
Let $T\in \mathcal{B}_{A}(\mathcal{H}).$ Then 
\begin{eqnarray*}
dw_{A}^2(T)&\leq & 3\left \| {(|T|_{A}^2)}^{\sharp_A}|T|_{A}^2+|T|_{A}^2\right \|_A \\
&&  -c_{A}(|T|_{A}^2+T)m_{A}(|T|_{A}^2+T)-c_{A}(|T|_{A}^2-T)m_{A}(|T|_{A}^2-T).
\end{eqnarray*}
\end{theorem}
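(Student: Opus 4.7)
The plan is to derive a pointwise refinement of Theorem \ref{upper15}(i) that produces the two additive correction terms, and then pass to the supremum. Set $B=|T|_A^2$ and $S_\pm=B\pm T$, so that $S_\pm$ are precisely the operators appearing in the statement. For any $x\in\mathcal{H}$ with $\|x\|_A=1$, the elementary identity $|\alpha|^2+\beta^2=\tfrac{1}{2}|\alpha+\beta|^2+\tfrac{1}{2}|\alpha-\beta|^2$ applied to $\alpha=\langle Tx,x\rangle_A$ and $\beta=\|Tx\|_A^2=\langle Bx,x\rangle_A$ yields
\[ |\langle Tx,x\rangle_A|^2+\|Tx\|_A^4=\tfrac{1}{2}|\langle S_+x,x\rangle_A|^2+\tfrac{1}{2}|\langle S_-x,x\rangle_A|^2, \]
while the parallelogram law in the seminorm $\|\cdot\|_A$ supplies the companion identity
\[ \|S_+x\|_A^2+\|S_-x\|_A^2=2\|Tx\|_A^2+2\|Bx\|_A^2=2\langle(B+B^{\sharp_A}B)x,x\rangle_A. \]

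I would then invoke Lemma \ref{lem14} with $(x,y,e)$ replaced by $(S_\pm x,\,x,\,x)$; after expanding the factor $\|x\|_A\|y\|_A-|\langle x,y\rangle_A|$ this collapses to the AM-GM type inequality $2|\langle S_\pm x,x\rangle_A|\|S_\pm x\|_A\leq\|S_\pm x\|_A^2+|\langle S_\pm x,x\rangle_A|^2$. Adding $|\langle S_\pm x,x\rangle_A|^2$ to both sides and then applying Cauchy-Schwarz $|\langle S_\pm x,x\rangle_A|^2\leq\|S_\pm x\|_A^2$ to one of the resulting terms gives the pivotal scalar estimate $|\langle S_\pm x,x\rangle_A|^2+2|\langle S_\pm x,x\rangle_A|\|S_\pm x\|_A\leq 3\|S_\pm x\|_A^2$. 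Summing this for both sign choices, dividing by $2$, and substituting the two identities from the first paragraph produces the pointwise bound
\[ |\langle Tx,x\rangle_A|^2+\|Tx\|_A^4+|\langle S_+x,x\rangle_A|\|S_+x\|_A+|\langle S_-x,x\rangle_A|\|S_-x\|_A\leq 3\|B^{\sharp_A}B+B\|_A. \]

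To finish, I would note that $|\langle S_\pm x,x\rangle_A|\geq c_A(S_\pm)$ and $\|S_\pm x\|_A\geq m_A(S_\pm)$ for every $A$-unit $x$, so $|\langle S_\pm x,x\rangle_A|\|S_\pm x\|_A\geq c_A(S_\pm)m_A(S_\pm)$ pointwise. Transferring these constant lower bounds to the right-hand side of the last display and then taking the supremum in $\|x\|_A=1$ on the left yields exactly the claimed inequality.

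The main obstacle is the scalar step: Lemma \ref{lem14} by itself only delivers an AM-GM type estimate, and the characteristic coefficient $3$ emerges only when this is combined with Cauchy-Schwarz in the right order. Checking that the two polarization identities in the first paragraph mesh cleanly (both carry the same factor $\tfrac12$ and the same decomposition into $S_\pm$) is the other place where care is needed, but once the choice $S_\pm=B\pm T$ is fixed the computation is routine.
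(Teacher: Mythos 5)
Your proof is correct and follows essentially the same route as the paper: the polarization identity reducing to $S_\pm=|T|_A^2\pm T$, the scalar estimate $|\langle S_\pm x,x\rangle_A|^2+2|\langle S_\pm x,x\rangle_A|\|S_\pm x\|_A\leq 3\|S_\pm x\|_A^2$, the parallelogram identity giving $2(B+B^{\sharp_A}B)$, and the $c_A m_A$ lower bound on the cross terms. The only cosmetic difference is that you obtain the pivotal scalar estimate from AM--GM plus Cauchy--Schwarz, whereas the paper composes Lemmas \ref{lem14} and \ref{lem13}; these are equivalent.
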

\begin{proof}
Let $x\in \mathcal{H}$ with $\|x\|_{A}=1$. Then using Lemma \ref{lem14} and Lemma \ref{lem13} we get,
\begin{eqnarray*}
|\langle Tx,x \rangle_{A}|^2 &\leq& \|Tx\|_{A}^2\|x\|_{A}^2-2|\langle Tx,x \rangle _{A}\langle x,x \rangle_{A}|(\|Tx\|_{A}\|x\|_{A}-|\langle Tx,x \rangle_{A}|) \\
&=& \|Tx\|_{A}^2+2|\langle Tx,x \rangle_{A}||\langle x,Tx \rangle_{A}|-2|\langle Tx,x \rangle_{A}|\|Tx\|_{A} \\
&\leq& \|Tx\|_{A}^2 +\|Tx\|_{A}^2+\langle Tx,Tx \rangle_{A}-2c_{A}(T)\|Tx\|_{A} \\
&\leq& 3\langle |T|_{A}^2x,x \rangle_{A}-2c_{A}(T)m_{A}(T).
\end{eqnarray*}
Using the above inequality, we get
\begin{eqnarray*}
	&& |\langle Tx,x \rangle _{A}|^2+\|Tx\|_{A}^4 \\
	=&& \frac{1}{2}\left (|\|Tx\|_{A}^2+\langle Tx,x \rangle_{A}|^2+|\|Tx\|_{A}^2-\langle Tx,x \rangle_{A}|^2\right ) \\
	=&&\frac{1}{2}\left (|\langle (|T|_{A}^2+T)x,x \rangle_{A}|^2+|\langle (|T|_{A}^2-T)x,x \rangle_{A}|^2 \right )\\
	\leq && \frac{1}{2}\Big(3 \left \langle \Big||T|_{A}^2+T\Big|_{A}^2x,x \right \rangle_{A}-2c_{A}(|T|_{A}^2+T)m_{A}(|T|_{A}^2+T) \\
	&& +3 \left \langle \Big||T|_{A}^2-T\Big|_{A}^2x,x \right \rangle_{A}-2c_{A}(|T|_{A}^2-T)m_{A}(|T|_{A}^2-T)\Big)\\
	=&& \frac{3}{2}\left \langle \left (\Big||T|_{A}^2+T\Big|_{A}^2+\Big||T|_{A}^2-T\Big|_{A}^2\right )x,x \right \rangle_{A} -c_{A}(|T|_{A}^2+T)m_{A}(|T|_{A}^2+T)\\
	&&-c_{A}(|T|_{A}^2-T)m_{A}(|T|_{A}^2-T)\\
	=&& 3\left \langle \left ({(|T|_{A}^2)}^{\sharp_A}|T|_{A}^2+|T|_{A}^2\right )x,x \right \rangle_{A}\\
	&& -c_{A}(|T|_{A}^2+T)m_{A}(|T|_{A}^2+T) -c_{A}(|T|_{A}^2-T)m_{A}(|T|_{A}^2-T).
\end{eqnarray*}
Therefore, taking supremum over all $A$-unit vectors in $\mathcal{H}$, we get the required inequality.
\end{proof}

\begin{remark}
We would like to note that the inequality in \cite[Th. 2.20]{BBBP} follows from Theorem \ref{upper17} by considering $A=I$.
\end{remark}

\noindent Next we prove the following lemma. 

\begin{lemma} \label{lem18}
Let $x,y \in \mathcal{H}$ and  $\lambda \in \mathbb{C}$. Then we have the following equality:
\[\|x\|_{A}^2\|y\|_{A}^2-|\langle x,y\rangle_{A}|^2=\|x-\lambda y\|_{A}^2\|y\|_{A}^2-|\langle x-\lambda y,y\rangle_A|^2.\]
\end{lemma}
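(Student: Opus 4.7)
The identity is a Gram-determinant invariance property: the left-hand side is the determinant of the $2 \times 2$ semi-inner-product Gram matrix
\[
G(x,y)=\begin{pmatrix}\|x\|_A^2 & \langle x,y\rangle_A\\ \langle y,x\rangle_A & \|y\|_A^2\end{pmatrix},
\]
and the right-hand side is $\det G(x-\lambda y, y)$. The plan therefore is to show that this determinant is invariant under the elementary shift $x \mapsto x-\lambda y$, which is essentially a column/row operation on the Gram matrix.

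Concretely, I would proceed by direct expansion using the sesquilinearity and conjugate symmetry of $\langle\cdot,\cdot\rangle_A$. First compute
\[
\|x-\lambda y\|_A^2 = \|x\|_A^2 - \lambda\,\overline{\langle x,y\rangle_A} - \bar\lambda\,\langle x,y\rangle_A + |\lambda|^2\|y\|_A^2,
\]
and
\[
\langle x-\lambda y, y\rangle_A = \langle x,y\rangle_A - \lambda\|y\|_A^2.
\]
Multiplying the first by $\|y\|_A^2$ gives the first term on the right-hand side, and squaring the modulus of the second gives
\[
|\langle x-\lambda y,y\rangle_A|^2 = |\langle x,y\rangle_A|^2 - \bar\lambda\|y\|_A^2\langle x,y\rangle_A - \lambda\|y\|_A^2\overline{\langle x,y\rangle_A} + |\lambda|^2\|y\|_A^4.
\]
Subtracting, the cross terms $\lambda\,\overline{\langle x,y\rangle_A}\|y\|_A^2$ and $\bar\lambda\,\langle x,y\rangle_A\|y\|_A^2$ cancel, as does $|\lambda|^2\|y\|_A^4$, leaving exactly $\|x\|_A^2\|y\|_A^2 - |\langle x,y\rangle_A|^2$.

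There is no real obstacle here beyond careful bookkeeping of conjugates; the whole argument is a three-line expansion, and the only subtlety is that $\langle\cdot,\cdot\rangle_A$ is merely positive semidefinite, but the computation never divides by $\|y\|_A$, so the possible vanishing of $\|y\|_A$ causes no issue. I would present the proof as a single centered display chain equating the two sides via the expansions above.
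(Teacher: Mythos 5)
Your proof is correct and is essentially the same as the paper's: both are a direct expansion of $\|x-\lambda y\|_A^2$ and $\langle x-\lambda y,y\rangle_A$ by sesquilinearity, with the cross terms (which the paper groups as $2\,\mathrm{Re}(\overline{\lambda}\langle x,y\rangle_A)\|y\|_A^2$) cancelling in the subtraction. The Gram-determinant framing is a pleasant conceptual gloss, but the computation is identical.
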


\begin{proof}
We have, 
\begin{eqnarray*}
&& \|x-\lambda y\|_{A}^2\|y\|_{A}^2-|\langle x-\lambda y,y\rangle_{A}|^2 \\
=&& \langle x-\lambda y,x-\lambda y\rangle_{A}\|y\|_{A}^2-|\langle x,y\rangle_{A}-\lambda \|y\|_{A}^2 |^2 \\
=&& \left(\|x\|_{A}^2+|\lambda|^2\|y\|_{A}^2-2Re(\overline{\lambda}\langle x,y\rangle_{A})\right)\|y\|_{A}^2 -|\langle x,y\rangle_{A}|^2-|\lambda|^2\|y\|_{A}^4\\
&& +2 Re(\overline{\lambda}\langle x,y\rangle_{A})\|y\|_{A}^2 \\
=&& \|x\|_{A}^2\|y\|_{A}^2-|\langle x,y\rangle_{A}|^2.
\end{eqnarray*}
\end{proof}

\noindent Using Lemma \ref {lem18}, we obtain the following upper bound for the $A$-Davis-Wielandt radius of operator in $\mathcal{B}_{A}({\mathcal{H}}).$

\begin{theorem}\label{upper19}
Let $T\in \mathcal{B}_{A}(\mathcal{H}).$ Then
\begin{eqnarray*}
dw_{A}^2(T) &\leq& \inf_{\lambda \in \mathbb{R}}\sup_{\theta \in \mathbb{R}} \Big \{ 2|\lambda| \| \cos \theta Re_{A}(T)+|T|_{A}^2+\sin \theta Im_{A}(T)- \lambda I\|_{A} \\
&& +\frac{1}{2}	\|\cos \theta Re_{A}(T)+|T|_{A}^2+\sin \theta Im_{A}(T)-2\lambda I\|_{A}^2\\
&&+\frac{1}{2} \|\cos \theta Re_{A}(T)-|T|_{A}^2+\sin \theta Im_{A}(T)\|_{A}^2\Big\}.
\end{eqnarray*}
In particular,
\begin{eqnarray*}
dw_{A}^2(T) &\leq& \frac{1}{2}\sup_{\theta \in \mathbb{R}} \Big\{\left \|\cos \theta~ Re_{A}(T)+ |T|_{A}^2 +\sin \theta ~Im_{A}(T)\right \|_{A}^2\\
&& +\left \|\cos \theta~ Re_{A}(T)-|T|_{A}^2 +\sin \theta~ Im_{A}(T)\right \|_{A}^2 \Big\}.
\end{eqnarray*}
\end{theorem}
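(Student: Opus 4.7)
The plan is to exploit the $A$-Cartesian decomposition $T = Re_{A}(T) + {\rm i}\, Im_{A}(T)$ and reduce the estimate to one-parameter inequalities for the $A$-selfadjoint family $S_\theta := \cos\theta\, Re_{A}(T) + \sin\theta\, Im_{A}(T)$. Since $Re_{A}(T)$ and $Im_{A}(T)$ are $A$-selfadjoint, the scalars $\langle Re_{A}(T)x, x\rangle_A$ and $\langle Im_{A}(T)x, x\rangle_A$ are real for every $A$-unit vector $x$, so $|\langle Tx,x\rangle_A|^2 = \langle Re_{A}(T)x,x\rangle_A^2 + \langle Im_{A}(T)x, x\rangle_A^2$, and the elementary identity $\sqrt{u^2+v^2} = \sup_{\theta\in\mathbb{R}}(u\cos\theta+v\sin\theta)$ gives $|\langle Tx,x\rangle_A| = \sup_{\theta\in\mathbb{R}}\langle S_\theta x,x\rangle_A$, with the supremum attained at some $\theta_x$ depending on $x$.

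Fix an $A$-unit vector $x$ together with such a $\theta_x$. Since $|T|_A^2 = T^{\sharp_A}T$ is $A$-selfadjoint, applying $a^2+b^2 = \tfrac12\bigl((a+b)^2+(a-b)^2\bigr)$ with $a = \langle S_{\theta_x}x,x\rangle_A$ and $b = \langle |T|_A^2 x,x\rangle_A$ (both real) gives
\[
|\langle Tx, x\rangle_A|^2 + \|Tx\|_A^4 \;=\; \tfrac12\langle (S_{\theta_x}+|T|_A^2)x,x\rangle_A^2 + \tfrac12\langle (S_{\theta_x}-|T|_A^2)x,x\rangle_A^2.
\]
Next, for any real $\lambda$ and any $A$-selfadjoint $B$, a direct expansion using $\|x\|_A = 1$ (alternatively, Lemma \ref{lem18} applied with $x\to Bx$ and $y\to x$) yields the algebraic identity
\[
\langle Bx,x\rangle_A^2 \;=\; \langle(B-2\lambda I)x,x\rangle_A^2 \,+\, 4\lambda\,\langle(B-\lambda I)x,x\rangle_A.
\]
Specializing to $B = S_{\theta_x}+|T|_A^2$ and bounding each piece via the $A$-Cauchy--Schwarz chain $|\langle Cx,x\rangle_A|\leq\|Cx\|_A\leq\|C\|_A$, and similarly $\langle (S_{\theta_x}-|T|_A^2)x,x\rangle_A^2 \leq \|S_{\theta_x}-|T|_A^2\|_A^2$, one arrives at
\[
|\langle Tx,x\rangle_A|^2 + \|Tx\|_A^4 \leq 2|\lambda|\|S_{\theta_x}+|T|_A^2 - \lambda I\|_A + \tfrac12\|S_{\theta_x}+|T|_A^2 - 2\lambda I\|_A^2 + \tfrac12\|S_{\theta_x}-|T|_A^2\|_A^2.
\]

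Replacing $\theta_x$ by the supremum over $\theta\in\mathbb{R}$ on the right, taking the supremum over $A$-unit vectors $x$ on the left, and finally the infimum over $\lambda\in\mathbb{R}$ produces the announced bound; the ``in particular'' assertion is the specialization $\lambda = 0$, for which the first term drops out and the remaining two combine into the simpler estimate. The main technical point is the four-term cancellation behind the identity for $\langle Bx,x\rangle_A^2$: writing $c = \langle Bx,x\rangle_A$, one checks $c^2 = (c-2\lambda)^2 + 4\lambda(c-\lambda)$, and this rearrangement is precisely what replaces the naive estimate $c^2 \leq \|B-2\lambda I\|_A^2 + 4|\lambda|\|B-2\lambda I\|_A + 4\lambda^2$ (from the brute splitting $c = (c-2\lambda) + 2\lambda$) by the sharper $\|B-2\lambda I\|_A^2 + 4|\lambda|\|B-\lambda I\|_A$ used above. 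Everything else is routine Cauchy--Schwarz together with the sup-characterization of the modulus.
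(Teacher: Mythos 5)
Your argument is correct and follows essentially the same route as the paper: the Cartesian decomposition, the characterization $|\langle Tx,x\rangle_A|=\sup_{\theta}\langle(\cos\theta\, Re_{A}(T)+\sin\theta\, Im_{A}(T))x,x\rangle_A$, and the $\lambda$-shift rearrangement followed by Cauchy--Schwarz and the norm bounds. The only difference is cosmetic: you perform the half-sum splitting first and then the scalar identity $c^2=(c-2\lambda)^2+4\lambda(c-\lambda)$ on the $(S_{\theta}+|T|_{A}^2)$-term, whereas the paper applies Lemma \ref{lem18} to $S_{\theta}$ and $|T|_{A}^2$ separately and then combines the remainders; the two computations are algebraically identical and yield the same bound.
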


\begin{proof}
Let $x\in \mathcal{H}$ with $\|x\|_{A}=1.$ Then there exists $\theta \in \mathbb{R}$ such that $|\langle Tx,x \rangle _{A}|=e^{{-\rm i} \theta}\langle Tx,x\rangle_{A}$. Using   the Cartesian decomposition of $T$, i.e.,  $T=Re_{A}(T)+{\rm i}~Im_{A}(T),$ we get,
	\begin{eqnarray*}
	|\langle Tx,x \rangle_{A}| &=& \langle e^{{-\rm i} \theta}Tx,x \rangle_{A}\\
	&=& \langle ((\cos\theta-{\rm i} \sin\theta)(Re_{A}(T)+{\rm i}~ Im_{A}(T)))x,x \rangle_{A}\\
	&=& \langle (\cos\theta Re_{A}(T)+\sin\theta Im_{A}(T))x,x \rangle_{A} + {\rm i}\langle(\cos\theta Im_{A}(T)-\sin\theta Re_{A}(T) )x,x \rangle_{A}.
	\end{eqnarray*}
	Since $|\langle Tx,x \rangle_{A}|\in \mathbb{R} $,  $|\langle Tx,x \rangle_{A}|= \langle (\cos\theta Re_{A}(T)+\sin\theta Im_{A}(T))x,x \rangle_{A} $.
	Now using Lemma \ref{lem18}, we get  for any $\lambda \in \mathbb{R}$, 
	\begin{eqnarray*}
	|\langle Tx,x \rangle_{A}|^2 &=& |\langle (\cos\theta Re_{A}(T)+\sin\theta Im_{A}(T))x,x \rangle_{A}|^2\\
	&=& \|(\cos\theta Re_{A}(T)+\sin\theta Im_{A}(T))x\|_{A}^2\\
	&&-\|(\cos\theta Re_{A}(T)+\sin\theta Im_{A}(T))x-\lambda x\|_{A}^2\\
	&&+|\langle (\cos\theta Re_{A}(T)+\sin\theta Im_{A}(T))x-\lambda x,x \rangle _{A}|_{A}^2\\
	&=& \langle (\cos\theta Re_{A}(T)+\sin\theta Im_{A}(T))^2x,x \rangle _{A}\\
	&&-\langle(\cos\theta Re_{A}(T)+\sin\theta Im_{A}(T)-\lambda I)^2x,x\rangle_{A}\\
	&& +|\langle (\cos\theta Re_{A}(T)+\sin\theta Im_{A}(T)-\lambda I)x,x\rangle_{A} |^2\\
	&=& \Big \langle \Big \{(\cos\theta Re_{A}(T)+\sin\theta Im_{A}(T))^2 \\
	&& -(\cos\theta Re_{A}(T)+\sin\theta Im_{A}(T)-\lambda I)^2 \Big\}x,x \Big\rangle_{A}\\
	&& +|\langle(\cos\theta Re_{A}(T)+\sin\theta Im_{A}(T)-\lambda I)x,x\rangle_{A}|^2\\
	&=& \langle (2 \lambda (\cos\theta Re_{A}(T)+\sin\theta Im_{A}(T))-\lambda^2 I)x,x \rangle_{A}\\
	&&+ |\langle(\cos\theta Re_{A}(T)+\sin\theta Im_{A}(T)-\lambda I)x,x\rangle_{A}|^2.
	\end{eqnarray*}
	Similarly, using Lemma \ref{lem18}, we have 
	\begin{eqnarray*}
	\|Tx\|_{A}^4 &=& |\langle |T|_{A}^2x,x \rangle_{A}|^2 \\
	&=&\langle (2 \lambda |T|_{A}^2-{\lambda}^2 I)x,x \rangle _{A}+ |\langle(|T|_{A}^2-\lambda I)x,x \rangle_{A}|^2.
	\end{eqnarray*} 
Now,
\begin{eqnarray*}
|\langle Tx,x \rangle_{A}|^2+\|Tx\|_{A}^4 &=& \langle 2\lambda\{\cos \theta Re_{A}(T)+|T|_{A}^2+\sin \theta Im_{A}(T)\}x,x\rangle_{A}- 2\lambda^2\\
&& +\frac{1}{2}	|\langle(\cos \theta Re_{A}(T)+|T|_{A}^2+\sin \theta Im_{A}(T)-2\lambda I)x,x\rangle_{A}|^2\\
&&+\frac{1}{2}|\langle(\cos \theta Re_{A}(T)-|T|_{A}^2+\sin \theta Im_{A}(T))x,x\rangle_{A}|^2\\
&\leq& 2|\lambda| \| \cos \theta Re_{A}(T)+|T|_{A}^2+\sin \theta Im_{A}(T)- \lambda I\|_{A} \\
&& +\frac{1}{2}	\|\cos \theta Re_{A}(T)+|T|_{A}^2+\sin \theta Im_{A}(T)-2\lambda I\|_{A}^2\\
&&+\frac{1}{2} \|\cos \theta Re_{A}(T)-|T|_{A}^2+\sin \theta Im_{A}(T)\|_{A}^2\\
&\leq&  \sup_{\theta \in \mathbb{R}} \Big \{ 2|\lambda| \| \cos \theta Re_{A}(T)+|T|_{A}^2+\sin \theta Im_{A}(T)- \lambda I\|_{A} \\
&& +\frac{1}{2}	\|\cos \theta Re_{A}(T)+|T|_{A}^2+\sin \theta Im_{A}(T)-2\lambda I\|_{A}^2\\
&&+\frac{1}{2} \|\cos \theta Re_{A}(T)-|T|_{A}^2+\sin \theta Im_{A}(T)\|_{A}^2\Big\}.
\end{eqnarray*}
Therefore, taking supremum over all $A$-unit vectors in $\mathcal{H}$, we get 
\begin{eqnarray*}
dw_{A}^2(T) &\leq& \sup_{\theta \in \mathbb{R}} \Big \{ 2|\lambda| \| \cos \theta Re_{A}(T)+|T|_{A}^2+\sin \theta Im_{A}(T)- \lambda I\|_{A} \\
&& +\frac{1}{2}	\|\cos \theta Re_{A}(T)+|T|_{A}^2+\sin \theta Im_{A}(T)-2\lambda I\|_{A}^2\\
&&+\frac{1}{2} \|\cos \theta Re_{A}(T)-|T|_{A}^2+\sin \theta Im_{A}(T)\|_{A}^2\Big\}.
\end{eqnarray*}
This inequality holds for all $\lambda \in \mathbb{R}$, so we get the desired inequality.
In particular, if we choose $\lambda =0$, then
\begin{eqnarray*}
dw_{A}^2(T) &\leq&  \frac{1}{2}\sup_{\theta\in \mathbb{R}} \Big\{\left \|\cos \theta~ Re_{A}(T)+|T|_{A}^2 +\sin \theta ~Im_{A}(T)\right \|_{A}^2\\
&& +\left \|\cos \theta~ Re_{A}(T)-|T|_{A}^2 +\sin \theta~ Im_{A}(T)\right \|_{A}^2 \Big\}. 
\end{eqnarray*}
\end{proof}

\begin{remark}
We would like to note that the inequality in \cite[Th. 2.23]{BBBP} follows from Theorem \ref{upper19} by considering $A=I$.
\end{remark}

\noindent Next we obtain the following inequality.

\begin{theorem}\label{upper20}
Let $T\in \mathcal{B}_{A}(\mathcal{H}).$ Then 
\begin{eqnarray*}
 dw_{A}^2(T) &\leq& \inf_{\lambda \in \mathbb{C}}\Big \{\Big (2 \left \|Re(\lambda)~Re_{A}(T)+Im(\lambda)~Im_{A}(T)\right \|_{A}   +\left \||T|^2_A-2Re(\overline{\lambda}T)\right \|_{A}\Big )^2 \\
 && + 2 \|Re(\overline{\lambda}T)\|_{A}- |\lambda|^2+w_{A}^2(T-\lambda I) \Big \}.
\end{eqnarray*}
In particular, $dw_{A}(T) \leq \sqrt{w_{A}^2(T) + \|T\|_{A}^4}$.
\end{theorem}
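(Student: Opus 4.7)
The plan is to exploit the shift $T \mapsto T-\lambda I$ together with the $A$-selfadjoint auxiliary operator $B := Re_{A}(\overline{\lambda}T) = \tfrac{1}{2}(\overline{\lambda}T + \lambda T^{\sharp_A})$. A direct computation expanding $\overline{\lambda} = Re(\lambda) - {\rm i}\,Im(\lambda)$ and $T = Re_A(T) + {\rm i}\,Im_A(T)$ yields the identity $B = Re(\lambda)\,Re_{A}(T) + Im(\lambda)\,Im_{A}(T)$, so the two seminorm quantities appearing in the statement are equal: $\|Re(\lambda)Re_{A}(T) + Im(\lambda)Im_{A}(T)\|_A = \|Re_A(\overline{\lambda}T)\|_A = \|B\|_A$. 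Both $B$ and $|T|^2_A - 2B$ are $A$-selfadjoint, so for them $w_A(\cdot) = \|\cdot\|_A$; this will be used at every estimation step.

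Fix $\lambda \in \mathbb{C}$ and an $A$-unit vector $x \in \mathcal{H}$. For the first piece $|\langle Tx,x\rangle_{A}|^2$ I would write $\langle Tx,x\rangle_{A} = \langle (T-\lambda I)x,x\rangle_{A} + \lambda$ and expand the square. Using $\langle Bx,x\rangle_A = Re(\overline{\lambda}\langle Tx,x\rangle_A)$, and observing that two copies of $|\lambda|^2$ partially cancel, this simplifies to
\[
|\langle Tx,x\rangle_{A}|^2 = |\langle (T-\lambda I)x,x\rangle_{A}|^2 + 2\langle B x,x\rangle_A - |\lambda|^2.
\]
Bounding the first summand by $w_A^2(T-\lambda I)$ and the middle one (real but possibly signed) by $2\|B\|_A$ yields
\[
|\langle Tx,x\rangle_{A}|^2 \leq w_A^2(T-\lambda I) + 2\|B\|_A - |\lambda|^2.
\]

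For the second piece I would split $\|Tx\|_A^2 = \langle |T|^2_A x,x\rangle_A = \langle(|T|^2_A - 2B)x,x\rangle_A + 2\langle Bx,x\rangle_A$. Since both summand operators are $A$-selfadjoint, the triangle inequality gives $\|Tx\|_A^2 \leq \||T|^2_A - 2B\|_A + 2\|B\|_A$, and squaring produces a bound on $\|Tx\|_A^4$ of exactly the form of the first summand of the claimed right-hand side. Adding the two estimates, taking supremum over $A$-unit vectors and infimum over $\lambda \in \mathbb{C}$ delivers the main inequality. The particular case is the substitution $\lambda = 0$, which forces $B = 0$, leaves $\||T|^2_A\|_A = \|T\|_A^2$ and $w_A^2(T - 0\cdot I) = w_A^2(T)$, and so gives $dw_A^2(T) \leq w_A^2(T) + \|T\|_A^4$ immediately.

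The only real subtlety is sign bookkeeping: $\langle Bx,x\rangle_A$ and $\langle(|T|^2_A-2B)x,x\rangle_A$ are real but may be negative, so absolute values must be inserted before invoking $w_A(\cdot) = \|\cdot\|_A$ for these $A$-selfadjoint operators. The surviving $-|\lambda|^2$ in the first estimate is a genuinely signed term, arising from the partial cancellation of $|\lambda|^2$ contributions in the expansion $|\langle (T-\lambda I)x,x\rangle_A + \lambda|^2$; it is kept unaltered rather than absorbed into a triangle inequality, which is what makes the $\inf_{\lambda}$ nontrivial.
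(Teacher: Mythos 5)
Your proposal is correct and follows essentially the same route as the paper: the paper's Lemma 2.19 (the $\lambda$-shift identity) applied with $y=x$ is exactly your expansion of $|\langle (T-\lambda I)x,x\rangle_A+\lambda|^2$, and the identity $Re(\lambda)Re_A(T)+Im(\lambda)Im_A(T)=Re_A(\overline{\lambda}T)$ together with $w_A(\cdot)=\|\cdot\|_A$ on $A$-selfadjoint operators is used in the same way at the same steps. The only cosmetic difference is that you bypass the Cartesian-decomposition computation of $\|Tx\|_A^2$ by splitting $\langle |T|_A^2x,x\rangle_A$ directly into the two $A$-selfadjoint pieces, which is a tidier presentation of the same estimate.
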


\begin{proof}
Let $x\in \mathcal{H}$ with $\|x\|_{A}=1.$ Let $\lambda \in \mathbb{C}$. Using Lemma \ref{lem18}  we get, 
\begin{eqnarray*}
\|Tx\|_{A}^2\|x\|_{A}^2-|\langle Tx,x \rangle_{A}|^2 &=& \|Tx-\lambda x\|_{A}^2\|x\|_{A}^2-|\langle Tx-\lambda x ,x \rangle_{A}|^2.
\end{eqnarray*}
Using Cartesian decomposition of $T$, i.e.,  $T=Re_{A}(T)+{\rm i}~Im_{A}(T)$, we get,
\begin{eqnarray*}
 \|Tx\|_{A}^2 &=& \left (\langle Re_{A}(T)x,x \rangle_{A} \right)^2 -\left (\langle Re_{A}(T-\lambda I)x,x \rangle _{A}\right )^2+ \left (\langle Im_{A}(T)x,x \rangle _{A}\right)^2\\
&& - \left(\langle Im_{A}(T-\lambda I)x,x \rangle _{A}\right)^2+\|Tx-\lambda x\|_{A}^2\\
&=& \langle (2Re_{A}(T)-Re(\lambda)I)x,x\rangle_{A} \langle Re(\lambda) x,x \rangle_{A} \\
&&+\langle ( 2Im_{A}(T)-Im(\lambda)I)x,x\rangle \langle Im(\lambda) x,x \rangle _{A}+\|Tx-\lambda x\|_{A}^2\\
&=& 2Re(\lambda) \langle Re_{A}(T)x,x \rangle_{A} +2Im(\lambda) \langle Im_{A}(T)x,x \rangle _{A}\\
&&- (Re(\lambda))^2-(Im(\lambda))^2+\|Tx-\lambda x\|_{A}^2\\
&=& 2 \left (Re(\lambda)\langle Re_{A}(T)x,x \rangle+Im(\lambda) \langle Im_{A}(T)x,x \rangle _{A}\right)-|\lambda|^2\\
&& + \left \langle Tx-\lambda x ,Tx-\lambda x \right \rangle_{A}\\
&=& 2 \left (Re(\lambda)\langle Re_{A}(T)x,x \rangle_{A}+Im(\lambda) \langle Im_{A}(T)x,x \rangle_{A} \right)\\
&& + \left \langle (|T|^2_A-2Re_{A}(\overline{\lambda}T))x,x \right \rangle_{A}\\
&\leq& 2 \left \|Re(\lambda)~Re_{A}(T)+Im(\lambda)~Im_{A}(T)\right \|_{A}   +\left \||T|^2_A-2Re_{A}(\overline{\lambda}T)\right \|_{A}.
\end{eqnarray*}
Again using Lemma \ref{lem18} we get,
 \begin{eqnarray*}
|\langle Tx,x \rangle_{A}|^2 &=& \|Tx\|_{A}^2-\|Tx-\lambda x\|_{A}^2+|\langle Tx-\lambda x ,x \rangle_{A}|^2\\
&=& 2\langle Re(\overline{\lambda}T)x,x \rangle_{A} - |\lambda|^2+|\langle Tx-\lambda x ,x \rangle_{A}|^2\\
&\leq& 2 \|Re_{A}(\overline{\lambda}T)\|- |\lambda|^2+w_{A}^2(T-\lambda I).
\end{eqnarray*}
Hence, 
\begin{eqnarray*}
&& |\langle Tx,x \rangle_{A}|^2+\|Tx\|_{A}^4\\ 
&&\leq 2 \|Re_{A}(\overline{\lambda}T)\|- |\lambda|^2+w_{A}^2(T-\lambda I)\\
&&+  \left (2 \left \|Re(\lambda)~Re_{A}(T)+Im(\lambda)~Im_{A}(T)\right \|   +\left \||T|^2_A-2Re_{A}(\overline{\lambda}T)\right \|_{A} \right )^2.
\end{eqnarray*}
Therefore, taking supremum over all $A$-unit vectors in $\mathcal{H}$, and then taking infimum over all $\lambda \in \mathbb{C}$, we get
\begin{eqnarray*}
 dw_{A}^2(T) &\leq& \inf_{\lambda \in \mathbb{C}}\Big \{\left (2 \left \|Re(\lambda)~Re_{A}(T)+Im(\lambda)~Im_{A}(T)\right \|_{A}   +\left \||T|^2_A-2Re_{A}(\overline{\lambda}T)\right \|_{A}\right )^2 \\
 && + 2 \|Re_{A}(\overline{\lambda}T)\|_{A}- |\lambda|^2+w_{A}^2(T-\lambda I) \Big \}.
\end{eqnarray*}
Taking $\lambda =0$, we get $dw_{A}(T) \leq \sqrt{w_{A}^2(T) + \|T\|_{A}^4}.$
\end{proof}

\begin{remark}
We would like to note that the inequality in \cite[Th. 2.24]{BBBP} follows from Theorem \ref{upper20} by considering $A=I$.
\end{remark}

\noindent In the following theorem we obtain an upper bound for the $A$-Davis-Wielandt radius of sum of two operators in $ \mathcal{B}_{A}({\mathcal{H}}).$

\begin{theorem}\label{theorem24}
Let $X,Y \in \mathcal{B}_{A}(\mathcal{H})$. Then
\[dw_{A}(X+Y)\leq dw_{A}(X)+dw_{A}(Y)+w_{A}(X^{\sharp_A}Y+Y^{\sharp_A}X).\]
In particular, if $A(X^{\sharp_A}Y+Y^{\sharp_A}X)=O$ then $$dw_{A}(X+Y)\leq dw_{A}(X)+dw_{A}(Y).$$
\end{theorem}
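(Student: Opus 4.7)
The strategy is to fix an $A$-unit vector $x\in\mathcal{H}$ and estimate the quantity
$$\sqrt{|\langle (X+Y)x,x\rangle_{A}|^2+\|(X+Y)x\|_{A}^4}$$
componentwise by recognising it as the Euclidean norm in $\mathbb{R}^2$ of the pair $\bigl(|\langle (X+Y)x,x\rangle_{A}|,\,\|(X+Y)x\|_{A}^2\bigr)$. The main inequality will then follow from the triangle inequality in the plane applied to three carefully chosen vectors, followed by taking the supremum over all $A$-unit vectors.

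First, the scalar triangle inequality gives $|\langle (X+Y)x,x\rangle_{A}|\leq|\langle Xx,x\rangle_{A}|+|\langle Yx,x\rangle_{A}|$. For the second coordinate, expand $\|(X+Y)x\|_{A}^2=\langle (X+Y)x,(X+Y)x\rangle_{A}$ and use the identities $AX^{\sharp_A}=X^*A$ and $AY^{\sharp_A}=Y^*A$ to rewrite the cross term $2\,\mathrm{Re}\langle Xx,Yx\rangle_{A}$ as $\langle (X^{\sharp_A}Y+Y^{\sharp_A}X)x,x\rangle_{A}$. This yields the identity
$$\|(X+Y)x\|_{A}^2=\|Xx\|_{A}^2+\|Yx\|_{A}^2+\langle (X^{\sharp_A}Y+Y^{\sharp_A}X)x,x\rangle_{A},$$
and since the left-hand side is nonnegative we obtain $\|(X+Y)x\|_{A}^2\leq \|Xx\|_{A}^2+\|Yx\|_{A}^2+|\langle (X^{\sharp_A}Y+Y^{\sharp_A}X)x,x\rangle_{A}|$.

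Now set $u=(|\langle Xx,x\rangle_{A}|,\|Xx\|_{A}^2)$, $v=(|\langle Yx,x\rangle_{A}|,\|Yx\|_{A}^2)$ and $w=(0,|\langle (X^{\sharp_A}Y+Y^{\sharp_A}X)x,x\rangle_{A}|)$ in $\mathbb{R}^2$. The coordinatewise estimates above together with monotonicity of the Euclidean norm and the triangle inequality in $\mathbb{R}^2$ give
$$\sqrt{|\langle (X+Y)x,x\rangle_{A}|^2+\|(X+Y)x\|_{A}^4}\leq \|u\|+\|v\|+\|w\|.$$
By definition $\|u\|\leq dw_{A}(X)$, $\|v\|\leq dw_{A}(Y)$, and $\|w\|=|\langle (X^{\sharp_A}Y+Y^{\sharp_A}X)x,x\rangle_{A}|\leq w_{A}(X^{\sharp_A}Y+Y^{\sharp_A}X)$. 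Taking supremum over all $A$-unit vectors $x$ yields the claimed inequality. For the particular case, the hypothesis $A(X^{\sharp_A}Y+Y^{\sharp_A}X)=O$ forces $\langle (X^{\sharp_A}Y+Y^{\sharp_A}X)x,x\rangle_{A}=0$ for all $x$, hence $w_{A}(X^{\sharp_A}Y+Y^{\sharp_A}X)=0$, and the desired bound reduces to $dw_{A}(X+Y)\leq dw_{A}(X)+dw_{A}(Y)$.

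The only nontrivial step is the $A$-adjoint bookkeeping used to identify the cross term $2\,\mathrm{Re}\langle Xx,Yx\rangle_{A}$ with $\langle (X^{\sharp_A}Y+Y^{\sharp_A}X)x,x\rangle_{A}$; everything else is an application of scalar or planar triangle inequalities. I expect no genuine obstacle beyond correctly recording this identity.
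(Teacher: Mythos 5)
Your proof is correct and is essentially the same argument as the paper's: the paper packages the pointwise decomposition as the shell inclusion $DW_{A}(X+Y)\subseteq DW_{A}(X)+DW_{A}(Y)+\{(0,\langle (X^{\sharp_A}Y+Y^{\sharp_A}X)x,x\rangle_{A})\}$ and then applies the triangle inequality for the norm $\sqrt{|a|^2+|b|^2}$, which is exactly your three-vector triangle inequality in the plane together with the same cross-term identity. The only cosmetic difference is that you take absolute values first and invoke monotonicity of the Euclidean norm on the nonnegative orthant, whereas the paper applies the triangle inequality directly to the exact complex decomposition; both are valid.
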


\begin{proof}
From the definition of  the A-Davis-Wielandt shell we get,
\begin{eqnarray*}
DW_{A}(X+Y) &=& \left \{ \Big (\left \langle (X+Y)x,x \right \rangle _{A} , \left \langle (X+Y)x,(X+Y)x \right \rangle _{A}\Big) : x \in \mathcal{H},\|x\|_{A}=1 \right \}\\
&=& \Big\{ \Big( \langle Xx,x \rangle_{A} , \langle Xx,Xx \rangle_{A} \Big)+ \Big( \langle Yx,x \rangle _{A}, \langle Yx,Yx \rangle _{A}\Big)\\
&& +\Big(0,\langle (X^{\sharp_A}Y+Y^{\sharp_A}X)x ,x\rangle _{A}\Big) : x \in \mathcal{H},\|x\|_{A}=1 \Big \}.
\end {eqnarray*}
Hence, $DW_{A}(X+Y)\subseteq DW_{A}(X)+DW_{A}(Y)+A$, where
\[A = \left \{ \left (0,\langle (X^{\sharp_A}Y+Y^{\sharp_A}X)x ,x\rangle _{A}\right): x \in \mathcal{H},\|x\|_{A}=1 \right \}. \]
This implies the first inequality of the theorem. In particular, if we consider $A(X^{\sharp_A}Y+Y^{\sharp_A}X)=O,$ then we get the second inequality.
\end{proof}

\begin{remark}
If we consider $A=I$ in Theorem \ref{theorem24} then we get the inequalities in \cite[Lemma 3.3 and Prop. 3.4]{BBBP}.
\end{remark}

\noindent Next we state the following lemma, proof of which can be found in \cite[ Lemma 3.1]{BFP}.
\begin{lemma}\label{lem3.1}
Let $T_{ij}\in \mathcal{B}_{A}({\mathcal{H}})$, for $i,j=1,2.$ Then $(T_{ij})_{2\times 2}\in \mathcal{B}_{\mathbb{A}}({\mathcal{H}\oplus \mathcal{H}})$ and $$\left(\begin{array}{cc}
	T_{11} & T_{12}\\
	T_{21} & T_{22}
	\end{array}\right)^{\sharp_{\mathbb{A}}}=\left(\begin{array}{cc}
	T^{\sharp_A}_{11} & T^{\sharp_A}_{21}\\
	T^{\sharp_A}_{12} & T^{\sharp_A}_{22}
	\end{array}\right).$$
\end{lemma}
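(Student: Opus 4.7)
The plan is to prove both claims---membership in $\mathcal{B}_{\mathbb{A}}(\mathcal{H}\oplus\mathcal{H})$ and the explicit formula for $T^{\sharp_{\mathbb{A}}}$---simultaneously by exhibiting the candidate matrix and verifying the two defining properties of the distinguished $\sharp$-adjoint: the operator equation $\mathbb{A}X=T^{*}\mathbb{A}$ and the range condition $\mathcal{R}(X)\subseteq\overline{\mathcal{R}(\mathbb{A})}$. Setting $T=(T_{ij})_{2\times 2}$ and
$$S=\begin{pmatrix} T_{11}^{\sharp_A} & T_{21}^{\sharp_A} \\ T_{12}^{\sharp_A} & T_{22}^{\sharp_A} \end{pmatrix},$$
the object $S$ is well-defined because each $T_{ij}\in\mathcal{B}_{A}(\mathcal{H})$ admits an $A$-adjoint by hypothesis, with the individual defining identity $AT_{ij}^{\sharp_{A}}=T_{ij}^{*}A$ and range constraint $\mathcal{R}(T_{ij}^{\sharp_{A}})\subseteq\overline{\mathcal{R}(A)}$ guaranteed by the construction of $\sharp_{A}$.

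Next I would carry out the block-matrix verification. Since $\mathbb{A}$ is block-diagonal, multiplying $\mathbb{A}$ by $S$ amounts to applying $A$ to each entry of $S$; likewise, multiplying $T^{*}$ by $\mathbb{A}$ amounts to applying $A$ on the right to each entry of $T^{*}$. Entry by entry,
$$(\mathbb{A}S)_{ij}=AS_{ij}=AT_{ji}^{\sharp_{A}}=T_{ji}^{*}A=(T^{*}\mathbb{A})_{ij},$$
so $\mathbb{A}S=T^{*}\mathbb{A}$. This shows $S$ is an $\mathbb{A}$-adjoint of $T$, and therefore $T\in\mathcal{B}_{\mathbb{A}}(\mathcal{H}\oplus\mathcal{H})$ by the characterization via Douglas' theorem stated in the preliminaries.

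For the precise identification $S=T^{\sharp_{\mathbb{A}}}$, I would invoke the uniqueness clause of Douglas' theorem recalled in the introduction: $T^{\sharp_{\mathbb{A}}}$ is the unique solution of $\mathbb{A}X=T^{*}\mathbb{A}$ whose range lies in $\overline{\mathcal{R}(\mathbb{A})}$. A short computation using the block-diagonal form of $\mathbb{A}$ gives $\overline{\mathcal{R}(\mathbb{A})}=\overline{\mathcal{R}(A)}\oplus\overline{\mathcal{R}(A)}$. Since every entry of $S$ has range inside $\overline{\mathcal{R}(A)}$, for any $(x,y)\in\mathcal{H}\oplus\mathcal{H}$ the vector
$$S\begin{pmatrix}x\\ y\end{pmatrix}=\begin{pmatrix}T_{11}^{\sharp_A}x+T_{21}^{\sharp_A}y\\ T_{12}^{\sharp_A}x+T_{22}^{\sharp_A}y\end{pmatrix}$$
belongs to $\overline{\mathcal{R}(A)}\oplus\overline{\mathcal{R}(A)}=\overline{\mathcal{R}(\mathbb{A})}$, so $\mathcal{R}(S)\subseteq\overline{\mathcal{R}(\mathbb{A})}$ and the uniqueness clause forces $S=T^{\sharp_{\mathbb{A}}}$.

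The argument is essentially mechanical and I do not anticipate a serious obstacle; the only point requiring care is bookkeeping with the transpose of indices in $S$ (off-diagonal entries swap positions before applying $\sharp_{A}$), which is exactly what the calculation $(T^{*}\mathbb{A})_{ij}=T_{ji}^{*}A$ forces. Everything else reduces to Douglas' theorem applied to the single positive operator $A$ entry-by-entry.
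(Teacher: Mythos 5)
Your proof is correct and complete: the entrywise verification of $\mathbb{A}S=T^{*}\mathbb{A}$ together with the range condition $\mathcal{R}(S)\subseteq\overline{\mathcal{R}(A)}\oplus\overline{\mathcal{R}(A)}=\overline{\mathcal{R}(\mathbb{A})}$ and the uniqueness clause of Douglas' theorem is exactly what is needed. The paper itself gives no proof of this lemma, deferring to the cited reference \cite[Lemma 3.1]{BFP}; your argument is the standard one and in particular handles correctly the index transposition forced by $(T^{*}\mathbb{A})_{ij}=T_{ji}^{*}A$.
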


\noindent Using Theorem \ref{theorem24} and Lemma \ref{lem3.1}, we prove the following inequality.

\begin{cor}\label{upper26}
Let $X,Y \in \mathcal{B}_{A}(\mathcal{H})$, then
\[ dw_{\mathbb{A}}\left(\begin{array}{cc}
	O & X\\
	Y & O
	\end{array}\right) \leq \sqrt{\frac{1}{4}\|X\|_{A}^2+\|X\|_{A}^4}+\sqrt{\frac{1}{4}\|Y\|_{A}^2+\|Y\|_{A}^4}.\]
\end{cor}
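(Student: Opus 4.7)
The plan is to split the target matrix as
\[\begin{pmatrix} O & X \\ Y & O \end{pmatrix} = P + Q, \quad \text{where} \quad P := \begin{pmatrix} O & X \\ O & O \end{pmatrix}, \quad Q := \begin{pmatrix} O & O \\ Y & O \end{pmatrix},\]
both of which lie in $\mathcal{B}_{\mathbb{A}}(\mathcal{H}\oplus\mathcal{H})$ by Lemma~\ref{lem3.1}, and then to invoke Theorem~\ref{theorem24} applied with $\mathbb{A}$ in place of $A$. By Lemma~\ref{lem3.1}, $P^{\sharp_{\mathbb{A}}} = \begin{pmatrix} O & O \\ X^{\sharp_A} & O \end{pmatrix}$ and $Q^{\sharp_{\mathbb{A}}} = \begin{pmatrix} O & Y^{\sharp_A} \\ O & O \end{pmatrix}$, so a direct block multiplication gives $P^{\sharp_{\mathbb{A}}}Q = O$ and $Q^{\sharp_{\mathbb{A}}}P = O$. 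In particular $\mathbb{A}(P^{\sharp_{\mathbb{A}}}Q + Q^{\sharp_{\mathbb{A}}}P) = O$, and the ``in particular'' clause of Theorem~\ref{theorem24} yields
\[dw_{\mathbb{A}}(P+Q) \leq dw_{\mathbb{A}}(P) + dw_{\mathbb{A}}(Q).\]

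It then remains to show $dw_{\mathbb{A}}(P) \leq \sqrt{\tfrac14 \|X\|_A^2 + \|X\|_A^4}$ and the analogous estimate for $Q$. For an $\mathbb{A}$-unit vector $\xi = \begin{pmatrix} u \\ v \end{pmatrix}$ one has $\|u\|_A^2 + \|v\|_A^2 = 1$, while a direct computation gives $\langle P\xi, \xi\rangle_{\mathbb{A}} = \langle Xv, u\rangle_A$ and $\|P\xi\|_{\mathbb{A}}^2 = \|Xv\|_A^2$. Combining the $A$-Cauchy--Schwarz inequality with the AM--GM estimate $\|u\|_A^2 \|v\|_A^2 \leq \tfrac14$ and the bound $\|v\|_A^4 \leq 1$, I obtain
\[|\langle P\xi, \xi\rangle_{\mathbb{A}}|^2 + \|P\xi\|_{\mathbb{A}}^4 \leq \|X\|_A^2 \|u\|_A^2 \|v\|_A^2 + \|X\|_A^4 \|v\|_A^4 \leq \tfrac14 \|X\|_A^2 + \|X\|_A^4.\]
Taking supremum over all such $\xi$ yields the desired bound on $dw_{\mathbb{A}}(P)$; the analogous calculation with the roles of $u$ and $v$ interchanged gives the corresponding bound on $dw_{\mathbb{A}}(Q)$, and adding the two bounds completes the proof.

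The only genuinely substantive check is the vanishing of the cross term $P^{\sharp_{\mathbb{A}}}Q + Q^{\sharp_{\mathbb{A}}}P$, which is routine block-matrix arithmetic, so I do not expect any real obstacle. The separate treatment of the Cauchy--Schwarz term and the $\|v\|_A^4 \leq 1$ term is not tight jointly, but it precisely yields the stated inequality.
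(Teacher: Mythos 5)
Your proof is correct, and its first half (the decomposition into $P+Q$, the vanishing of $P^{\sharp_{\mathbb{A}}}Q+Q^{\sharp_{\mathbb{A}}}P$, and the appeal to Theorem~\ref{theorem24}) is exactly the paper's argument. Where you diverge is in bounding $dw_{\mathbb{A}}(P)$ and $dw_{\mathbb{A}}(Q)$: the paper first invokes the general bound $dw_{\mathbb{A}}(P)\leq\sqrt{w_{\mathbb{A}}^2(P)+\|P\|_{\mathbb{A}}^4}$ from (\ref{1st}), then uses that $\mathbb{A}P^2=O$ forces $w_{\mathbb{A}}(P)=\tfrac12\|P\|_{\mathbb{A}}$ (citing \cite[Cor. 2.2]{F}) together with $\|P\|_{\mathbb{A}}=\|X\|_A$ (citing \cite[Remark 3]{BNP}), whereas you compute $\langle P\xi,\xi\rangle_{\mathbb{A}}=\langle Xv,u\rangle_A$ and $\|P\xi\|_{\mathbb{A}}^2=\|Xv\|_A^2$ directly and estimate via Cauchy--Schwarz and $\|u\|_A^2\|v\|_A^2\leq\tfrac14$. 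Your route is more elementary and self-contained --- it avoids two external citations and, as a byproduct, re-derives the facts $w_{\mathbb{A}}(P)\le\tfrac12\|X\|_A$ and $\|P\|_{\mathbb{A}}\le\|X\|_A$ in the only form needed --- while the paper's route is shorter on the page and makes visible the structural reason for the constant $\tfrac14$ (namely that $P$ is $\mathbb{A}$-nilpotent of order two, the extremal case of the lower numerical-radius bound). Both arrive at the identical estimate $\sqrt{\tfrac14\|X\|_A^2+\|X\|_A^4}$, and the lack of joint tightness you flag is equally present in the paper's version, so nothing is lost.
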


\begin{proof}
Clearly, $\left(\begin{array}{cc}
	O & X\\
	O & O
	\end{array}\right)^{\sharp_{\mathbb{A}}}\left(\begin{array}{cc}
	O & O\\
	Y & O
	\end{array}\right)+\left(\begin{array}{cc}
	O & O\\
	Y & O
	\end{array}\right)^{\sharp_{\mathbb{A}}}\left(\begin{array}{cc}
	O & X\\
	O & O
	\end{array}\right)=\left(\begin{array}{cc}
	O & O\\
	O & O
	\end{array}\right)$. Therefore, from Theorem \ref{theorem24}, we get, 
	\begin{eqnarray*}
	&& dw_{\mathbb{A}}\left(\begin{array}{cc}
	O & X\\
	Y & O
	\end{array}\right) \\
	&\leq& dw_{\mathbb{A}}\left(\begin{array}{cc}
	O & X\\
	O & O
	\end{array}\right)+dw_{\mathbb{A}}\left(\begin{array}{cc}
	O & O\\
	Y & O
	\end{array}\right)\\
	&\leq& \sqrt{w_{\mathbb{A}}^2\left(\begin{array}{cc}
	O & X\\
	O & O
	\end{array}\right) +\left \|\left(\begin{array}{cc}
	O & X\\
	O & O
	\end{array}\right) \right \|_{\mathbb{A}}^4} +\sqrt{w_{\mathbb{A}}^2\left(\begin{array}{cc}
	O & O\\
	Y & O
	\end{array}\right)+\left \|\left(\begin{array}{cc}
	O & O\\
	Y & O
	\end{array}\right) \right\|_{\mathbb{A}}^4}\\
	&=& \sqrt{\frac{1}{4} \left \|\left(\begin{array}{cc}
	O & X\\
	O & O
	\end{array}\right) \right \|_{\mathbb{A}}^2+\left \|\left(\begin{array}{cc}
	O & X\\
	O & O
	\end{array}\right) \right \|_{\mathbb{A}}^4} +\sqrt{\frac{1}{4} \left \|\left(\begin{array}{cc}
	O & O\\
	Y & O
	\end{array}\right)\right \|_{\mathbb{A}}^2+\left \|\left(\begin{array}{cc}
	O & O\\
	Y & O
	\end{array}\right) \right\|_{\mathbb{A}}^4},\\
	&& \mbox{as}~~\mathbb{A}\left(\begin{array}{cc}
	O & X\\
	O & O
	\end{array}\right)^2=\mathbb{A}\left(\begin{array}{cc}
	O & O\\
	Y & O
	\end{array}\right)^2=\left(\begin{array}{cc}
	O & O\\
	O & O
	\end{array}\right),~~ \mbox {see  \cite[Cor. 2.2]{F}}\\
	&=& \sqrt{\frac{1}{4}\|X\|_{A}^2+\|X\|_{A}^4}+\sqrt{\frac{1}{4}\|Y\|_{A}^2+\|Y\|_{A}^4}, ~~\textit {by using \cite[Remark 3]{BNP}}.
	\end{eqnarray*}
\end{proof}

\begin{remark}
In particular, if we consider $A=I$ in Corollary \ref{upper26} then we have the inequality in \cite[Th. 3.5]{BBBP}.
\end{remark}

\noindent Next we state the following lemma, proof of which follows from $DW_A(U^{{\sharp_A}}TU)=DW_A(T)$, where $T \in \mathcal{B}_{A^{1/2}}(\mathcal{H})$ and $U\in \mathcal{B}_{A}(\mathcal{H})$  is an  $A$-unitary operator.

	\begin{lemma}\label{lem22}
	Let $T \in \mathcal{B}_{A^{1/2}}(\mathcal{H})$ and $U\in \mathcal{B}_{A}(\mathcal{H})$ be  an $A$-unitary operator. Then
	$$dw_{A}(U^{{\sharp_A}}TU)=dw_{A}(T).$$
	\end{lemma}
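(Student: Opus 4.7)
The plan is to establish the stronger set-theoretic identity $DW_A(U^{\sharp_A}TU)=DW_A(T)$ of the $A$-Davis-Wielandt shells; since $dw_A(\cdot)$ is by definition the supremum of $\sqrt{|c|^2+t^2}$ over pairs $(c,t)$ in the shell, the asserted equality of radii follows at once.

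For the inclusion $DW_A(U^{\sharp_A}TU)\subseteq DW_A(T)$, given an $A$-unit vector $x$, I would set $y=Ux$. Since $U$ is $A$-unitary, $\|y\|_A=\|Ux\|_A=\|x\|_A=1$. Using the defining relation $AU^{\sharp_A}=U^*A$, one computes $\langle U^{\sharp_A}TUx,x\rangle_A=\langle ATUx,Ux\rangle=\langle Ty,y\rangle_A$, and using $\|U^{\sharp_A}z\|_A=\|z\|_A$ for every $z\in\mathcal{H}$ one obtains $\|U^{\sharp_A}TUx\|_A=\|TUx\|_A=\|Ty\|_A$. So the pair associated to $U^{\sharp_A}TU$ at $x$ coincides with the pair associated to $T$ at $y$.

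For the reverse inclusion, given an $A$-unit vector $y$, I would take the surrogate $x=U^{\sharp_A}y$; the $A$-unitarity of $U$ gives $\|x\|_A=\|y\|_A=1$. Combining $(U^{\sharp_A})^{\sharp_A}U^{\sharp_A}=P_A$, $(U^{\sharp_A})^{\sharp_A}=P_AUP_A$, and $P_AU^{\sharp_A}=U^{\sharp_A}$ (valid since $\mathcal{R}(U^{\sharp_A})\subseteq\overline{\mathcal{R}(A)}$) yields $P_AUU^{\sharp_A}=P_A$, so $z:=UU^{\sharp_A}y-y\in\mathcal{N}(P_A)=\mathcal{N}(A)$. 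Because $T\in\mathcal{B}_{A^{1/2}}(\mathcal{H})$, the estimate $\|T\xi\|_A\le c\|\xi\|_A$ forces $T(\mathcal{N}(A))\subseteq\mathcal{N}(A)$, so $Tz\in\mathcal{N}(A)$. Exploiting the orthogonality $\mathcal{N}(A)\perp\overline{\mathcal{R}(A)}$, every cross term in the expansions of $\langle T(y+z),y+z\rangle_A$ and $\|T(y+z)\|_A^2$ vanishes; this gives $\langle TUU^{\sharp_A}y,UU^{\sharp_A}y\rangle_A=\langle Ty,y\rangle_A$ and $\|TUU^{\sharp_A}y\|_A=\|Ty\|_A$. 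Combined with the forward computation applied to this $x$ (so that $Ux=UU^{\sharp_A}y$), the pair associated to $T$ at $y$ is realised as the pair associated to $U^{\sharp_A}TU$ at $x$.

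The delicate point is precisely the reverse inclusion: since $U$ is not assumed surjective on $\mathcal{H}$, the naive change of variable $y=Ux$ cannot in general be inverted. The key is that the discrepancy $UU^{\sharp_A}y-y$ introduced by the surrogate $x=U^{\sharp_A}y$ lies in $\mathcal{N}(A)$, and $A$-bounded operators preserve $\mathcal{N}(A)$, so this discrepancy is invisible to every quantity measured through the $A$-seminorm.
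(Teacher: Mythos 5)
Your proof is correct and follows the same route as the paper, which simply asserts the shell identity $DW_A(U^{\sharp_A}TU)=DW_A(T)$ and derives the lemma from it without further detail. Your careful treatment of the reverse inclusion --- showing that the discrepancy $UU^{\sharp_A}y-y$ lies in $\mathcal{N}(A)$ and that $A$-bounded operators preserve $\mathcal{N}(A)$, so the discrepancy is invisible to every quantity measured through $\langle\cdot,\cdot\rangle_A$ --- supplies exactly the nontrivial step the paper leaves implicit.
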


	\noindent Using Lemma \ref{lem22}, we prove the following lemma.
	
\begin{lemma}\label{lem23}
	Let $X,Y \in \mathcal{B}_{A^{1/2}}(\mathcal{H})$. Then 
\begin{enumerate}[label=(\alph*)]
\item $dw_{\mathbb{A}}\left(\begin{array}{cc}
	O & X\\
	e^{{\rm i} \theta}Y & O
	\end{array}\right)=dw_{\mathbb{A}}\left(\begin{array}{cc}
	O & X\\
	Y & O
	\end{array}\right)$, for every $\theta \in \mathbb{R}.$
	\item $dw_{\mathbb{A}}\left(\begin{array}{cc}
	O & X\\
	Y & O
	\end{array}\right)=dw_{\mathbb{A}}\left(\begin{array}{cc}
	O & Y\\
	X & O
	\end{array}\right).$ 
	\end{enumerate}
	\end{lemma}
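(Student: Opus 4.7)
The plan is to produce, in each part, an $\mathbb{A}$-unitary operator $U\in \mathcal{B}_{\mathbb{A}}(\mathcal{H}\oplus\mathcal{H})$ that conjugates one matrix into the other (up to harmless modifications) and then invoke Lemma \ref{lem22}. For (a) the natural candidate is the diagonal operator $U=\left(\begin{array}{cc} I & O \\ O & e^{i\theta/2}I\end{array}\right)$, and for (b) it is the swap $U=\left(\begin{array}{cc} O & I \\ I & O\end{array}\right)$.

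First I would verify $\mathbb{A}$-unitarity of these candidates. Using Lemma \ref{lem3.1} together with the identity $(cI)^{\sharp_A}=\bar c\,P_A$ (which follows from $I^{\sharp_A}=A^{\dagger}A=P_A$), a short calculation gives $U^{\sharp_{\mathbb{A}}}=\left(\begin{array}{cc} P_A & O \\ O & e^{-i\theta/2}P_A\end{array}\right)$ in case (a) and $U^{\sharp_{\mathbb{A}}}=\left(\begin{array}{cc} O & P_A \\ P_A & O\end{array}\right)$ in case (b); in both cases the products $U^{\sharp_{\mathbb{A}}}U$ and $(U^{\sharp_{\mathbb{A}}})^{\sharp_{\mathbb{A}}}U^{\sharp_{\mathbb{A}}}$ reduce to $P_{\mathbb{A}}=\left(\begin{array}{cc} P_A & O \\ O & P_A\end{array}\right)$ using $P_A^2=P_A$. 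Block multiplication then yields, for (a),
\[U^{\sharp_{\mathbb{A}}}\left(\begin{array}{cc} O & X \\ e^{i\theta}Y & O\end{array}\right)U=e^{i\theta/2}\left(\begin{array}{cc} O & P_AX \\ P_AY & O\end{array}\right),\]
and for (b),
\[U^{\sharp_{\mathbb{A}}}\left(\begin{array}{cc} O & X \\ Y & O\end{array}\right)U=\left(\begin{array}{cc} O & P_AY \\ P_AX & O\end{array}\right).\]

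The final step is to observe that the $\mathbb{A}$-Davis-Wielandt radius is preserved under (i) multiplication of the operator by a unimodular scalar and (ii) replacement of any off-diagonal block $Z$ by $P_AZ$. Point (i) is immediate, since $|e^{i\theta/2}c|=|c|$ and $\|e^{i\theta/2}v\|_{\mathbb{A}}=\|v\|_{\mathbb{A}}$. For (ii), the identity $AP_A=A$ gives $\langle P_AZv,u\rangle_A=\langle Zv,u\rangle_A$, while a short argument using $(I-P_A)\mathcal{H}\subseteq\mathcal{N}(A)$ gives $\|P_AZv\|_A=\|Zv\|_A$; these two identities imply that the relevant $\mathbb{A}$-Davis-Wielandt shells coincide block-by-block, so their radii agree. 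The only mild obstacle in the whole argument is precisely the appearance of these $P_A$ factors, stemming from $I^{\sharp_A}=P_A\neq I$ in general, and step (ii) above is the clean way to dispose of them.
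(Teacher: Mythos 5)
Your proof is correct and follows essentially the same route as the paper: conjugating by the diagonal unitary $\left(\begin{smallmatrix} I & O \\ O & e^{i\theta/2}I\end{smallmatrix}\right)$ for (a) and the swap $\left(\begin{smallmatrix} O & I \\ I & O\end{smallmatrix}\right)$ for (b), then invoking Lemma \ref{lem22}. Your extra care with the $P_A$ factors arising from $I^{\sharp_A}=P_A$ (which the paper silently suppresses) is a welcome tightening but does not change the argument.
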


\begin{proof}
\begin{enumerate}[label=(\alph*)]
\item Let $U=\left(\begin{array}{cc}
	I & O\\
	O & e^{{\rm i}\frac{\theta}{2}}I
	\end{array}\right)$. Then  using Lemma \ref{lem22} we get, \\
	$dw_{\mathbb{A}}\left(\begin{array}{cc}
	O & X\\
	e^{{\rm i} \theta}Y & O
	\end{array}\right)=dw_{\mathbb{A}}\left(U^{\sharp_{\mathbb{A}}} \left(\begin{array}{cc}
	O & X\\
	e^{{\rm i}\theta}Y & O
	\end{array} \right) U\right)=dw_{\mathbb{A}}\left(\begin{array}{cc}
	O & e^{{\rm i}\frac{\theta}{2}} X\\
	e^{{\rm i}\frac{\theta}{2}} Y & O
	\end{array}\right) $ $= dw_{\mathbb{A}}\left(\begin{array}{cc}
	O & X\\
	Y & O
	\end{array}\right).$
	
\item  Considering  $U=\left(\begin{array}{cc}
	O & I\\
	I & O
	\end{array}\right)$  and  using Lemma \ref{lem22}, we get (b).
\end{enumerate}
\end{proof}

\noindent Using Lemma \ref{lem23}, we obtain an upper bound for the $A$-Davis-Wielandt radius of sum of product operators in $ \mathcal{B}_{A}({\mathcal{H}}).$

\begin{theorem}\label{th-30pro}
Let $P,Q,X,Y \in \mathcal{B}_{A}(\mathcal{H}).$ Then for any $t \in \mathbb{R}\setminus\{0\}$, we have 
$$dw_{A}^2(PXQ^{\sharp_A}\pm QYP^{\sharp_A}) \leq \left(t^2\|P\|_{A}^2+\frac{1}{t^2}\|Q\|_{A}^2\right)^2\left \{\left (t^2\|PX\|_{A}^2 +\frac{1}{t^2}\|QY\|_{A}^2\right )^2 +\alpha^2\right \},$$ 
where $\alpha=w_{\mathbb{A}}\left(\begin{array}{cc}
	O & X\\
	Y & O
	\end{array}\right).$
\end{theorem}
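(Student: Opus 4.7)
The plan is to realize the operator $T=PXQ^{\sharp_A}\pm QYP^{\sharp_A}$ as the value of the sesquilinear form associated with the block operator $S_{\pm}=\bigl(\begin{smallmatrix} O & X\\ \pm Y & O\end{smallmatrix}\bigr)$ on a suitably weighted pair of vectors in $\mathcal{H}\oplus\mathcal{H}$, and then estimate $|\langle Tx,x\rangle_A|$ and $\|Tx\|_A$ separately for every $A$-unit vector $x$. By Lemma \ref{lem23}(a) (applied with $\theta=\pi$ to handle the minus sign) one has $w_{\mathbb{A}}(S_{\pm})=\alpha$, which is how $\alpha$ enters the final bound.

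\smallskip

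\emph{Step 1: the weighted identity.} Fix $x\in\mathcal{H}$ with $\|x\|_A=1$ and a real $t\neq 0$, and set $y_t=\bigl(\begin{smallmatrix} tP^{\sharp_A}x\\ t^{-1}Q^{\sharp_A}x\end{smallmatrix}\bigr)\in \mathcal{H}\oplus\mathcal{H}$. Because $t$ is real, the scalars $t$ and $t^{-1}$ cancel in each summand of $\langle S_{\pm}y_t,y_t\rangle_{\mathbb{A}}$, and using the $A$-adjoint identity $\langle PZx,x\rangle_A=\langle Zx,P^{\sharp_A}x\rangle_A$ (and its analogue for $Q$), I would verify
\[\langle S_{\pm}y_t,y_t\rangle_{\mathbb{A}}=\langle PXQ^{\sharp_A}x,x\rangle_A\pm\langle QYP^{\sharp_A}x,x\rangle_A=\langle Tx,x\rangle_A.\]
Combining this with $|\langle S_{\pm}z,z\rangle_{\mathbb{A}}|\leq \alpha\,\|z\|_{\mathbb{A}}^2$ and $\|y_t\|_{\mathbb{A}}^2=t^2\|P^{\sharp_A}x\|_A^2+t^{-2}\|Q^{\sharp_A}x\|_A^2\leq t^2\|P\|_A^2+t^{-2}\|Q\|_A^2$ yields
\[|\langle Tx,x\rangle_A|^2\leq \alpha^2\bigl(t^2\|P\|_A^2+t^{-2}\|Q\|_A^2\bigr)^2.\]

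\smallskip

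\emph{Step 2: bound on $\|Tx\|_A$.} The triangle inequality and submultiplicativity of $\|\cdot\|_A$ give $\|Tx\|_A\leq\|PX\|_A\|Q^{\sharp_A}x\|_A+\|QY\|_A\|P^{\sharp_A}x\|_A$. Applying the Cauchy--Schwarz inequality in $\mathbb{R}^2$ to the pair $(t\|PX\|_A,t^{-1}\|QY\|_A)$ and $(t^{-1}\|Q^{\sharp_A}x\|_A,t\|P^{\sharp_A}x\|_A)$, together with $\|P^{\sharp_A}x\|_A\leq\|P\|_A$ and $\|Q^{\sharp_A}x\|_A\leq\|Q\|_A$, then produces
\[\|Tx\|_A^4\leq \bigl(t^2\|PX\|_A^2+t^{-2}\|QY\|_A^2\bigr)^2\bigl(t^2\|P\|_A^2+t^{-2}\|Q\|_A^2\bigr)^2.\]

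\smallskip

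Adding the two estimates, factoring out the common quantity $(t^2\|P\|_A^2+t^{-2}\|Q\|_A^2)^2$, and taking the supremum over all $A$-unit vectors $x$ delivers the desired inequality. The main obstacle is the identity of Step 1: one must observe that the block-diagonal real scaling $\mathrm{diag}(tI,t^{-1}I)$ preserves the form $\langle S_{\pm}\cdot,\cdot\rangle_{\mathbb{A}}$ precisely because $S_{\pm}$ is antidiagonal and $t\in\mathbb{R}$, so that the weighted lift $y_t$ of $(P^{\sharp_A}x,Q^{\sharp_A}x)$ still represents $\langle Tx,x\rangle_A$ while its $\mathbb{A}$-norm carries the weights needed to match the right-hand side. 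Once this is in place, the rest is a routine combination of the triangle inequality and a weighted Cauchy--Schwarz.
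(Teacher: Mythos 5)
Your proof is correct and is essentially the paper's argument unwrapped to the vector level: your weighted vector $y_t$ is exactly $C^{\sharp_{\mathbb{A}}}\binom{x}{0}$ for the paper's auxiliary block operator $C=\bigl(\begin{smallmatrix} tP & t^{-1}Q\\ O & O\end{smallmatrix}\bigr)$, and your Steps 1 and 2 reproduce the paper's two estimates $|\langle ZC^{\sharp_{\mathbb{A}}}x,C^{\sharp_{\mathbb{A}}}x\rangle_{\mathbb{A}}|\le w_{\mathbb{A}}(Z)\,\|C^{\sharp_{\mathbb{A}}}x\|_{\mathbb{A}}^2$ and $\|CZC^{\sharp_{\mathbb{A}}}x\|_{\mathbb{A}}\le\|CZ\|_{\mathbb{A}}\|C^{\sharp_{\mathbb{A}}}x\|_{\mathbb{A}}$, with the $t$-weights built in from the start rather than substituted at the end. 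The only nit is that $w_{\mathbb{A}}(S_-)=w_{\mathbb{A}}(S_+)$ does not literally follow from Lemma \ref{lem23}(a), which concerns $dw_{\mathbb{A}}$ rather than $w_{\mathbb{A}}$; it follows from the same $\mathbb{A}$-unitary conjugation because $W_{\mathbb{A}}(U^{\sharp_{\mathbb{A}}}SU)=W_{\mathbb{A}}(S)$, a fact the paper's own proof relies on in exactly the same way.
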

\begin{proof}
Let $C,Z \in \mathcal{B}_{\mathbb{A}}(\mathcal{H} \oplus \mathcal{H})$ be such that $C=\left(\begin{array}{cc}
	P & Q\\
	O & O
	\end{array}\right)$ and $Z=\left(\begin{array}{cc}
	O & X\\
	Y & O
	\end{array}\right)$. Then we have, $CZC^{\sharp_{\mathbb{A}}}=\left(\begin{array}{cc}
	PXQ^{\sharp_A}+QYP^{\sharp_A} & O\\
	O & O
	\end{array}\right).$ Therefore,
	\begin{eqnarray*}
	dw_{A}^2(PXQ^{\sharp_A}+QYP^{\sharp_A}) &\leq& dw_{\mathbb{A}}^2 \left(\begin{array}{cc}
	PXQ^{\sharp_A}+QYP^{\sharp_A} & O\\
	O & O
	\end{array}\right) \\
	&=& dw_{\mathbb{A}}^2(CZC^{\sharp_{\mathbb{A}}})\\
	&=& \sup_{\|x\|_{\mathbb{A}}=1}\left \{|\langle CZC^{\sharp_{\mathbb{A}}}x,x \rangle _{\mathbb{A}}|^2+\|CZC^{\sharp_{\mathbb{A}}}x\|_{\mathbb{A}}^4 \right \}\\
	&=& \sup_{\|x\|_{\mathbb{A}}=1}\left \{|\langle ZC^{\sharp_{\mathbb{A}}}x,C^{\sharp_{\mathbb{A}}}x \rangle _{\mathbb{A}}|^2+\|CZC^{\sharp_{\mathbb{A}}}x\|_{\mathbb{A}}^4 \right \}\\
	&\leq& \sup_{\|x\|_{\mathbb{A}}=1}\left \{w_{\mathbb{A}}^2(Z)\|C^{\sharp_{\mathbb{A}}}x\|_{\mathbb{A}}^4+\|CZ\|_{\mathbb{A}}^4\|C^{\sharp_{\mathbb{A}}}x\|_{\mathbb{A}}^4 \right \}\\
	&=& \left (w_{\mathbb{A}}^2(Z)+\|CZ\|_{\mathbb{A}}^4 \right)\|C\|_{\mathbb{A}}^4.
\end{eqnarray*}
It is easy to see that $\|C\|_{\mathbb{A}}^2=\|PP^{\sharp_A}+QQ^{\sharp_A}\|_{A}$ and $\|CZ\|_{\mathbb{A}}^2=\|(QY)(QY)^{\sharp_A}+(PX)(PX)^{\sharp_A}\|_{A}.$ Therefore, from the above inequality, we get
	$$dw_{A}^2(PXQ^{\sharp_A}+QYP^{\sharp_A}) \leq \left(\|P\|_{A}^2+\|Q\|_{A}^2\right)^2\left \{(\|QY\|_{A}^2+\|PX\|_{A}^2)^2 +w_{\mathbb{A}}^2\left(Z\right)\right \}.$$
Replacing $Y$ by $-Y$ in the above inequality and using Lemma \ref{lem23} (a), we get
$$dw_{A}^2(PXQ^{\sharp_A}-QYP^{\sharp_A}) \leq \left(\|P\|_{A}^2+\|Q\|_{A}^2\right)^2\left \{(\|QY\|_{A}^2+\|PX\|_{A}^2)^2 +w_{\mathbb{A}}^2\left(Z\right)\right \}.$$
Clearly, the above two inequalities hold for all $P,Q \in \mathcal{B}_{A}(\mathcal{H}).$ So, replacing  $P$ by $tP$ and $Q$ by $\frac{1}{t}Q$, we get the required inequality of the theorem.
\end{proof}

\begin{cor}\label{cor30b}
Let $P,Q,X,Y \in \mathcal{B}_{A}(\mathcal{H})$ with $\|P\|_{A},\|Q\|_{A} \neq 0.$	 then
\begin{eqnarray*}
 (i)~ dw_{A}^2(PXQ^{\sharp_A} \pm QYP^{\sharp_A}) \leq 4\|P\|_{A}^2\|Q\|_{A}^2\Big \{\Big (\frac{\|P\|_{A}}{\|Q\|_{A}}\|QY\|_{A}^2+\frac{\|Q\|_{A}}{\|P\|_{A}}\|PX\|_{A}^2 \Big )^2 +\alpha^2\Big \},
\end{eqnarray*}
where $\alpha=w_{\mathbb{A}}\left(\begin{array}{cc}
	O & X\\
	Y & O
	\end{array}\right).$ 
	\begin{eqnarray*}
(ii)~ dw_{A}^2(X \pm Y) ~~~~~~~~&\leq&  ~~~~~~~~~~~4\left \{\left (\|X\|_{A}^2+\|Y\|_{A}^2 \right )^2 +w_{\mathbb{A}}^2\left(\begin{array}{cc}
	O & X\\
	Y & O
	\end{array}\right)\right \}.\hspace{2.2cm}
	\end{eqnarray*}

\end{cor}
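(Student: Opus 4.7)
The plan is to deduce both inequalities directly from the free-parameter bound in Theorem~\ref{th-30pro} by carefully chosen specializations.

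For part (i), I start from
$$dw_{A}^2(PXQ^{\sharp_A}\pm QYP^{\sharp_A}) \leq \left(t^2\|P\|_{A}^2+\tfrac{1}{t^2}\|Q\|_{A}^2\right)^2\left\{\left(t^2\|PX\|_{A}^2+\tfrac{1}{t^2}\|QY\|_{A}^2\right)^2 +\alpha^2\right\}$$
and choose $t$ so as to minimize the outer coefficient $\bigl(t^2\|P\|_A^2+\tfrac{1}{t^2}\|Q\|_A^2\bigr)^2$. By AM--GM this is minimized when the two summands are equal, which gives $t^2=\|Q\|_A/\|P\|_A$; this substitution is legitimate because both $\|P\|_A$ and $\|Q\|_A$ are assumed nonzero. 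For this $t$ the outer factor collapses to $(2\|P\|_A\|Q\|_A)^2=4\|P\|_A^2\|Q\|_A^2$, while the inner piece $t^2\|PX\|_A^2+\tfrac{1}{t^2}\|QY\|_A^2$ becomes $\tfrac{\|Q\|_A}{\|P\|_A}\|PX\|_A^2+\tfrac{\|P\|_A}{\|Q\|_A}\|QY\|_A^2$, and assembling the two factors yields exactly the inequality asserted in (i).

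For part (ii), I specialize (i) to $P=Q=I$. The identity lies in $\mathcal{B}_A(\mathcal{H})$ and one checks at once that $\|I\|_A=1$ from $\|Ix\|_A=\|x\|_A$, so the hypothesis of (i) is satisfied. Then $\|PX\|_A=\|X\|_A$, $\|QY\|_A=\|Y\|_A$, and the coefficients $\|P\|_A/\|Q\|_A$ and $\|Q\|_A/\|P\|_A$ are both equal to $1$, so the bound collapses to $4\{(\|X\|_A^2+\|Y\|_A^2)^2+\alpha^2\}$, which is the inequality of (ii).

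There is no serious obstacle in this proof: once Theorem~\ref{th-30pro} is in hand, both inequalities are obtained by algebraic substitution. The only genuine step of content is recognizing the AM--GM--optimal value of $t$ for the outer factor in part (i), together with the trivial observation that $I$ is an admissible replacement for $P$ and $Q$ in part (ii).
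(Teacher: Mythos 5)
Your proof is correct and is essentially identical to the paper's: the authors likewise obtain (i) by substituting $t=\sqrt{\|Q\|_A/\|P\|_A}$ into Theorem~\ref{th-30pro} and (ii) by then taking $P=Q=I$. The AM--GM remark is just the motivation for that choice of $t$; the computation itself matches the paper's.
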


\begin{proof}
Considering $t=\sqrt{\frac{\|Q\|_{A}}{\|P\|_{A}}}$ in Theorem \ref{th-30pro}, we get the inequality (i). Choosing $P=Q=I$ in (i), we get the inequality (ii). 
\end{proof}

\begin{cor}\label{cor30c}
Let $P,Q,X,Y \in \mathcal{B}_{A}(\mathcal{H})$ be such that $\|PX\|_{A},\|QY\|_{A} \neq 0. $ Then 
\begin{eqnarray*}
(i) ~ dw_{A}^2(PXQ^{\sharp_A}\pm QYP^{\sharp_A}) \leq \left (\frac{\|QY\|_{A}}{\|PX\|_{A}}\|P\|_{A}^2+\frac{\|PX\|_{A}}{\|QY\|_{A}}\|Q\|_{A}^2 \right)^2\left \{4\|PX\|^2_{A}\|QY\|_{A}^2+\alpha^2\right \},
\end{eqnarray*}
where $\alpha=w_{\mathbb{A}}\left(\begin{array}{cc}
	O & X\\
	Y & O
	\end{array}\right).$ 
\begin{eqnarray*}
(ii)~ dw_{A}^2(X\pm Y) &\leq& \left (\frac{\|Y\|_{A}}{\|X\|_{A}} +\frac{\|X\|_{A}}{\|Y\|_{A}} \right )^2 \left\{\left(2\|X\|_{A}\|Y\|_{A} \right )^2+w_{\mathbb{A}}^2\left(\begin{array}{cc}
	O & X\\
	Y & O
	\end{array}\right) \right\}.
		\end{eqnarray*}
	\end{cor}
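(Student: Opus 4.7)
The plan is to derive both parts of Corollary \ref{cor30c} directly from Theorem \ref{th-30pro} by making judicious choices of the free parameters. Theorem \ref{th-30pro} provides a bound depending on an arbitrary $t\in\mathbb{R}\setminus\{0\}$, and the idea is to choose $t$ in part (i) so that the second factor becomes as symmetric as possible, then to specialize the operators in part (ii).

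For part (i), I would choose $t^2=\|QY\|_A/\|PX\|_A$. This value is admissible precisely because $\|PX\|_A,\|QY\|_A\neq 0$, and it is the AM-GM-minimizing choice that makes the two summands in the inner bracket equal. With this substitution, the inner expression becomes
\[
t^2\|PX\|_A^2+\tfrac{1}{t^2}\|QY\|_A^2=\|PX\|_A\|QY\|_A+\|PX\|_A\|QY\|_A=2\|PX\|_A\|QY\|_A,
\]
whose square is $4\|PX\|_A^2\|QY\|_A^2$, while the outer factor becomes
\[
t^2\|P\|_A^2+\tfrac{1}{t^2}\|Q\|_A^2=\tfrac{\|QY\|_A}{\|PX\|_A}\|P\|_A^2+\tfrac{\|PX\|_A}{\|QY\|_A}\|Q\|_A^2.
\]
Substituting these two identities into Theorem \ref{th-30pro} yields exactly the inequality in (i).

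For part (ii), I would specialize $P=Q=I$ in (i). Then $\|P\|_A=\|Q\|_A=1$, $\|PX\|_A=\|X\|_A$, $\|QY\|_A=\|Y\|_A$, and $PXQ^{\sharp_A}\pm QYP^{\sharp_A}$ reduces (up to the action of $I^{\sharp_A}=P_A$, which leaves all $A$-seminorm quantities unchanged since everything is computed via $\langle\cdot,\cdot\rangle_A$) to $X\pm Y$. Substituting gives the stated bound.

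The argument is essentially a two-line substitution, so there is no serious obstacle; the only item worth a moment of care is the bookkeeping in part (ii), namely that replacing $I^{\sharp_A}$ by $P_A$ does not alter $A$-seminorms, $A$-numerical radii, or the $A$-Davis-Wielandt radius, which follows from the fact that $A$-based quantities see only the restriction to $\overline{\mathcal{R}(A)}$.
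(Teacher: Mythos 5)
Your proposal is correct and follows exactly the paper's route: substituting $t=\sqrt{\|QY\|_A/\|PX\|_A}$ in Theorem \ref{th-30pro} to get (i), and then setting $P=Q=I$ to get (ii). The extra remark that $I^{\sharp_A}=P_A$ does not affect any $A$-seminorm quantities is a legitimate point that the paper leaves implicit, and your justification of it is sound.
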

\begin{proof}
Considering $t=\sqrt{\frac{\|QY\|_{A}}{\|PX\|_{A}}}$ in Theorem \ref{th-30pro}, we get the inequality (i). Choosing $P=Q=I$ in (i), we get the inequality (ii). 
\end{proof}

\begin{remark}
Feki in \cite[Prop. 3]{FA} proved that if $X,Y \in \mathcal{B}_{A^{1/2}}(\mathcal{H})$ then the following inequality holds:
\[dw_A^2(X+Y)\leq 2\Big(dw_A(X)+dw_A(Y)\Big)+4\Big(dw_A(X)+dw_A(Y)\Big)^2.\] If we consider $A=\left(\begin{array}{cc}
	1 & 0\\
	0 & 2
	\end{array}\right)$, $X= \left(\begin{array}{cc}
	0 & 1\\
	0 & 0
	\end{array}\right)$ and $Y=\left(\begin{array}{cc}
	1 & 0\\
	0 & 0
	\end{array}\right)$ then \cite[Prop. 3]{FA} gives $dw_A(X+Y)\leq 4.2994$, whereas Theorem \ref{theorem24} gives $dw_A(X+Y)\leq 2.621320$, Corollary \ref{cor30b} (ii) gives $dw_A(X+Y)\leq 3.240466$ and Corollary \ref{cor30c} (ii) gives $dw_A(X+Y)\leq 3.26928$. Thus the bounds obtained in Theorem \ref{theorem24}, Corollary \ref{cor30b} (ii) and  Corollary \ref{cor30c} (ii) are better than that obtained in \cite[Prop. 3]{FA}.
\end{remark}

\noindent Now we determine the exact value of  the $\mathbb{A}$-Davis-Wielandt radius of  special type of $2\times 2$ operator matrices in  $\mathcal{B}_{A^{1/2}}(\mathcal{H}\oplus \mathcal{H})$.

\begin{theorem}\label{31}
Let $X \in \mathcal{B}_{A^{1/2}}(\mathcal{H})$ and  $\mathbb{T} =\left(\begin{array}{cc}
	I & X\\
	O & O
	\end{array}\right).$ Then 
	
		\[ dw_{\mathbb{A}}(\mathbb{T})=\begin{cases}
	\sqrt{2}, &\|X\|_{A}= 0 \\
	(\cos \theta_0 + \|X\|_{A} \sin \theta_0)(\cos^2 \theta_0 +(\cos \theta_0 + \|X\|_{A}\sin \theta_0)^2)^\frac{1}{2}, & \|X\|_{A}\neq 0,\\
	\end{cases} \]	
where $b=\|X\|_{A}$, $p=-\frac{2b^2-5}{2b},$ $q =- \frac{2b^2-2}{b^2},$ $r= -\frac{3}{2b},$ $s= \frac{1}{2^43^3b^6}(8b^8+20b^6+45b^4+61b^2+28),$ $\alpha=\frac{1}{27}(2p^3-9pq+27r),$ $\beta =(-\frac{\alpha}{2}+\sqrt{s})^\frac{1}{3},$ $\gamma=(-\frac{\alpha}{2}-\sqrt{s})^\frac{1}{3}$ and $\theta_0 = \tan^{-1}(\beta+\gamma-\frac{p}{3}).$

\end{theorem}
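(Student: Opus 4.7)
The plan is to reduce the supremum defining $dw_{\mathbb{A}}(\mathbb{T})$ to a one-parameter maximization over an angle $\theta$, then to solve the resulting critical-point cubic via Cardano's formula.

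First, write an arbitrary $\mathbb{A}$-unit vector as $z = (x,y)^T$ with $\|x\|_A^2 + \|y\|_A^2 = 1$; direct computation gives
\[
\langle \mathbb{T}z,z\rangle_{\mathbb{A}} = \|x\|_A^2 + \langle Xy,x\rangle_A, \qquad \|\mathbb{T}z\|_{\mathbb{A}}^2 = \|x\|_A^2 + 2\operatorname{Re}\langle Xy,x\rangle_A + \|Xy\|_A^2.
\]
Set $\|x\|_A = \cos\theta$ and $\|y\|_A = \sin\theta$ with $\theta \in [0,\pi/2]$. The expression $|\langle \mathbb{T}z,z\rangle_{\mathbb{A}}|^2 + \|\mathbb{T}z\|_{\mathbb{A}}^4$ is monotone increasing in $\operatorname{Re}\langle Xy,x\rangle_A \geq 0$ and in $\|Xy\|_A^2$. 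Combined with the Cauchy--Schwarz bound $|\langle Xy,x\rangle_A| \leq \|Xy\|_A\cos\theta$ and the $A$-operator-seminorm bound $\|Xy\|_A \leq b\sin\theta$, a standard approximation argument (take unit vectors $y_n^*$ with $\|Xy_n^*\|_A \to b$ and choose $x$ proportional to $Xy_n^*$) shows that the supremum is realized by saturating both inequalities, i.e., by taking $\|Xy\|_A = b\sin\theta$ and $\langle Xy,x\rangle_A = b\sin\theta\cos\theta$. This collapses the problem to
\[
dw_{\mathbb{A}}^2(\mathbb{T}) = \sup_{\theta\in[0,\pi/2]}(\cos\theta + b\sin\theta)^2\bigl[\cos^2\theta + (\cos\theta + b\sin\theta)^2\bigr].
\]

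If $b = 0$ this sup equals $\sup_\theta 2\cos^4\theta = 2$, and $dw_{\mathbb{A}}(\mathbb{T}) = \sqrt{2}$. For $b > 0$, substituting $t = \tan\theta$ rewrites the objective as $g(t) = (1+bt)^2[1+(1+bt)^2]/(1+t^2)^2$. Setting $g'(t) = 0$ and cancelling the factor $2(1+bt)(1+t^2)$ reduces the critical-point condition to $b[1+2(1+bt)^2](1+t^2) = 2t(1+bt)[1+(1+bt)^2]$, which on expansion yields the cubic $-2b^2 t^3 + (2b^3 - 5b)t^2 + (4b^2 - 4)t + 3b = 0$. Dividing by $-2b^2$ gives the monic form $t^3 + pt^2 + qt + r = 0$ with exactly the values of $p,q,r$ stated in the theorem. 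Cardano's method then applies: the substitution $t = y - p/3$ depresses the cubic to $y^3 + Py + Q = 0$ with $P = q - p^2/3$ and $Q = (2p^3 - 9pq + 27r)/27 = \alpha$, and writing $y = \beta + \gamma$ with $\beta\gamma = -P/3$ identifies $\beta^3, \gamma^3$ as the roots of $X^2 + \alpha X - P^3/27 = 0$, so $\beta = (-\alpha/2 + \sqrt{s})^{1/3}$ and $\gamma = (-\alpha/2 - \sqrt{s})^{1/3}$ with $s = \alpha^2/4 + P^3/27$.

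The main technical obstacle is the algebraic verification that this $s$ agrees with the explicit expression $(8b^8 + 20b^6 + 45b^4 + 61b^2 + 28)/(2^4 3^3 b^6)$ in the statement; using the pleasant factorization $P = -(2b^2+1)^2/(12b^2)$ one expands $(2p^3 - 9pq + 27r)^2$ and $(2b^2+1)^6$ as polynomials in $b$ and watches the top-degree terms cancel. A subsidiary check ensures the critical point $\theta_0$ gives a global maximum on $[0, \pi/2]$: positivity of $s$ forces a negative discriminant for the depressed cubic and thus a unique real root, which lies in $(0,\infty)$ since the original cubic is positive at $t = 0$ and tends to $-\infty$ as $t \to \infty$; comparison with the boundary values $g(0) = 2$ and $\lim_{t\to\infty} g(t) = b^4$ confirms that this interior root realizes the maximum. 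Taking the square root of $g(\theta_0)$ then gives the claimed closed form $\tan\theta_0 = \beta + \gamma - p/3$.
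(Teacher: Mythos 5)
Your proposal is correct and follows essentially the same route as the paper: reduce to the one--parameter supremum $\sup_{\theta\in[0,\pi/2]}(\cos\theta+b\sin\theta)^2\bigl[\cos^2\theta+(\cos\theta+b\sin\theta)^2\bigr]$ via Cauchy--Schwarz and $\|Xy\|_A\le \|X\|_A\|y\|_A$, and realize it with the sequence $z_n=(Xy_n,\,k_0y_n)^T/\sqrt{\|Xy_n\|_A^2+k_0^2}$. In fact you go further than the paper by actually deriving the critical-point cubic $t^3+pt^2+qt+r=0$, carrying out the Cardano reduction (the identity $P=-(2b^2+1)^2/(12b^2)$ and the value of $s$ check out), and verifying that the unique interior critical point dominates the boundary values $2$ and $b^4$ --- steps the paper asserts without proof.
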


\begin{proof}
Let  $z=\left(\begin{array}{cc}
	x \\
	y 
	\end{array}\right) \in \mathcal{H}\oplus \mathcal{H} $ be such that $\|z\|_{\mathbb{A}}=1$, i.e, $\|x\|_{A}^2+\|y\|_{A}^2=1$.
	Then $ \langle \mathbb{T}z,z \rangle _{\mathbb{A}} = \langle x+Xy,x \rangle _{A} ~~\mbox{and} ~~ \langle \mathbb{T}z,\mathbb{T}z \rangle_{\mathbb{A}} = \langle x+Xy,x+Xy \rangle_{A}. $
	Now, we have
	\begin{eqnarray*}
	&& |\langle \mathbb{T}z,z \rangle_{\mathbb{A}}|^2+|\langle \mathbb{T}z,\mathbb{T}z \rangle_{\mathbb{A}}|^2\\
	&\leq& \|x+Xy\|_{A}^2\|x\|_{A}^2 +\|x+Xy\|_{A}^4 \\
	&=& \|x+Xy\|_{A}^2 \left (\|x\|_{A}^2 +\|x+Xy\|_{A}^2 \right)\\
	&\leq& \sup_{\|x\|_{A}^2+\|y\|_{A}^2=1}(\|x\|_{A}+\|X\|_{A}\|y\|_{A})^2(\|x\|_{A}^2+(\|x\|_{A}+\|X\|_{A}\|y\|_{A})^2)\\
	&=& \sup_{\theta \in [0,\frac{\pi}{2}]}(\cos \theta+\|X\|_{A}\sin \theta)^2(\cos^2 \theta+(\cos \theta+\|X\|_{A}\sin \theta)^2).\\
	\end{eqnarray*}
First we consider the case $\|X\|_{A}=0.$ Then
$$\sup_{\theta \in [0,\frac{\pi}{2}]}(\cos \theta+\|X\|_{A}\sin \theta)^2(\cos^2 \theta+(\cos \theta+\|X\|_{A}\sin \theta)^2)=2.$$\\
Therefore, $dw_{\mathbb{A}}(\mathbb{T}) \leq \sqrt{2}.$ Now let $z=\left(\begin{array}{cc}
	x \\
	0 
	\end{array}\right)$ be such that $\|z\|_{\mathbb{A}}=1$, i.e., $\|x\|_{A}=1$. Then $ \langle \mathbb{T}z,z \rangle _{\mathbb{A}}= \|x\|_{A}^2 $ and $ \langle \mathbb{T}z,\mathbb{T}z \rangle_{\mathbb{A}} = \|x\|_{A}^2.$ Hence, $(|\langle \mathbb{T}z,z \rangle_{\mathbb{A}}|^2+|\langle \mathbb{T}z,\mathbb{T}z \rangle_{\mathbb{A}}|^2)^\frac{1}{2}=\sqrt{2}.$ Therefore, $dw_{\mathbb{A}}(\mathbb{T}) = \sqrt{2}.$
		
\noindent Next we consider the case $\|X\|_{A}\neq 0.$ Then
\begin{eqnarray*}
	&& \sup_{\theta \in [0,\frac{\pi}{2}]}(\cos \theta+\|X\|_{A}\sin \theta)^2(\cos^2 \theta+(\cos \theta+\|X\|_{A}\sin \theta)^2)\\
	&&=	(\cos \theta_0 + \|X\|_{A} \sin \theta_0)^2(\cos^2 \theta_0 +(\cos \theta_0 + \|X\|_{A}\sin \theta_0)^2),
	\end{eqnarray*}
where $b=\|X\|_{A}$, $p=-\frac{2b^2-5}{2b},$ $q =- \frac{2b^2-2}{b^2},$ $r= -\frac{3}{2b},$ $s= \frac{1}{2^43^3b^6}(8b^8+20b^6+45b^4+61b^2+28),$ $\alpha=\frac{1}{27}(2p^3-9pq+27r),$ $\beta =(-\frac{\alpha}{2}+\sqrt{s})^\frac{1}{3},$ $\gamma=(-\frac{\alpha}{2}-\sqrt{s})^\frac{1}{3}$ and $\theta_0 = \tan^{-1}(\beta+\gamma-\frac{p}{3}).$
Therefore, 
\[dw_{\mathbb{A}}(\mathbb{T}) \leq (\cos \theta_0 + \|X\|_{A} \sin \theta_0)(\cos^2 \theta_0 +(\cos \theta_0 + \|X\|_{A}\sin \theta_0)^2)^\frac{1}{2} .\]
 We now show that there exists a sequence $\{z_n\}$ in $\mathcal{H} \oplus \mathcal{H}$ with $\|z_n\|_{\mathbb{A}}=1$ such that $ \lim_{n \to \infty}(|\langle \mathbb{T}z_n,z_n \rangle_{\mathbb{A}}|^2+|\langle \mathbb{T}z_n,\mathbb{T}z_n \rangle_{\mathbb{A}}|^2)^\frac{1}{2}=(\cos \theta_0 + \|X\|_{A} \sin \theta_0)(\cos^2 \theta_0 +(\cos \theta_0 + \|X\|_{A}\sin \theta_0)^2)^\frac{1}{2}.$
 Since $X\in \mathcal{B}_{A^{1/2}}(\mathcal{H})$, there exists a sequence $\{y_n\}$ in $\mathcal{H}$ with $\|y_n\|_{A}=1$ such that $\lim_{n \to \infty} \|Xy_n\|_{A} = \|X\|_{A}.$
 Let $z^k_{n}=\frac{1}{\sqrt{\|Xy_n\|_{A}^2+k^2}}\left(\begin{array}{cc}
	Xy_n \\
	ky_n 
	\end{array}\right)$, where $k \geq 0 $. Then
	$|\langle \mathbb{T}z^k_{n},z^k_{n} \rangle_{\mathbb{A}}|^2+|\langle \mathbb{T}z^k_{n},\mathbb{T}z^k_{n} \rangle_{\mathbb{A}}|^2  = \frac{(1+k)^2\|Xy_n\|_{A}^4}{(\|Xy_n\|_{A}^2+k^2)^2}\left (1+(1+k)^2\right)$

	$= \left (\frac{\|Xy_n\|_{A}}{\sqrt{\|Xy_n\|_{A}^2+k^2}}+\frac{k\|Xy_n\|_{A}}{\sqrt{\|Xy_n\|_{A}^2+k^2}} \right )^2
	 \left (\frac{\|Xy_n\|_{A}^2}{\|Xy_n\|_{A}^2+k^2}+\left (\frac{\|Xy_n\|_{A}}{\sqrt{\|Xy_n\|_{A}^2+k^2}}+\frac{k\|Xy_n\|_{A}}{\sqrt{\|Xy_n\|_{A}^2+k^2}} \right )^2 \right ).$
We can choose $k_0 \geq 0$ such that  $\frac{\|X\|_{A}}{\sqrt{\|X\|_{A}^2+k_0^2}} = \cos \theta_0$ and  $ \frac{k_0}{\sqrt{\|X\|_{A}^2+k_0^2}}= \sin \theta_0$.
Therefore, if we choose $z_n=\frac{1}{\sqrt{\|Xy_n\|_{A}^2+k_0^2}}\left(\begin{array}{cc}
	Xy_n \\
	k_0y_n 
	\end{array}\right)$, then $\lim_{n \to \infty}(|\langle \mathbb{T}z_n,z_n \rangle_{\mathbb{A}}|^2+|\langle \mathbb{T}z_n,\mathbb{T}z_n \rangle_{\mathbb{A}}|^2)^\frac{1}{2}$ $= \Big (\cos \theta_0 + \|X\|_{A} \sin \theta_0 \Big)\Big (\cos^2 \theta_0 +(\cos \theta_0 + \|X\|_{A}\sin \theta_0)^2\Big)^\frac{1}{2}.$ 
This completes the proof.
\end{proof}

\begin{theorem}\label{32}
Let $X \in \mathcal{B}_{A^{1/2}}(\mathcal{H})$ and  $\mathbb{S} =\left(\begin{array}{cc}
	O & X\\
	O & O
	\end{array}\right)$. Then 
	\[ dw_{\mathbb{A}}(\mathbb{S})=\begin{cases}
		0, &\|X\|_{A}=0 \\
		\frac{\|X\|_{A}}{2\sqrt{1-\|X\|_{A}^2}}, & \|X\|_{A} < \frac{1}{\sqrt{2}} \\
		\|X\|_{A}^2, & \|X\|_{A} \geq \frac{1}{\sqrt{2}}.
		\end{cases} \]	
\end{theorem}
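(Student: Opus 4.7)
The plan is to reduce the computation to a one-variable optimization over $t=\|y\|_A\in[0,1]$ and then show the upper bound is attained by a suitably chosen sequence of $\mathbb{A}$-unit vectors.

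First, for $z=\left(\begin{smallmatrix}x\\y\end{smallmatrix}\right)\in\mathcal{H}\oplus\mathcal{H}$ with $\|z\|_{\mathbb{A}}^2=\|x\|_A^2+\|y\|_A^2=1$, I would compute directly that $\langle\mathbb{S}z,z\rangle_{\mathbb{A}}=\langle Xy,x\rangle_A$ and $\|\mathbb{S}z\|_{\mathbb{A}}^2=\|Xy\|_A^2$. Applying the Cauchy--Schwarz inequality for the $A$-semi-inner product together with the seminorm bound $\|Xy\|_A\leq\|X\|_A\|y\|_A$ yields, with $t=\|y\|_A$ and $b=\|X\|_A$,
$$|\langle\mathbb{S}z,z\rangle_{\mathbb{A}}|^2+\|\mathbb{S}z\|_{\mathbb{A}}^4\leq\|Xy\|_A^2\|x\|_A^2+\|Xy\|_A^4\leq b^2t^2(1-t^2)+b^4t^4.$$

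Next, I would maximize $f(s)=b^2s(1-s)+b^4s^2=b^2s+(b^4-b^2)s^2$ over $s=t^2\in[0,1]$. If $b=0$ the sup is $0$; otherwise the unique critical point is $s^\ast=\tfrac{1}{2(1-b^2)}$ (when $b\neq 1$). A case analysis gives: for $0<b<\tfrac{1}{\sqrt{2}}$, $s^\ast\in(0,1)$ and $f$ is concave (since $b<1$), so the max is $f(s^\ast)=\tfrac{b^2}{4(1-b^2)}$, yielding the upper bound $\tfrac{b}{2\sqrt{1-b^2}}$; for $b\geq\tfrac{1}{\sqrt{2}}$, either $s^\ast\geq 1$ (when $b<1$) or $f$ is convex/linear (when $b\geq 1$), so the max on $[0,1]$ is attained at $s=1$, giving $f(1)=b^4$ and the upper bound $b^2$. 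This matches the three branches in the stated formula.

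Finally, I would exhibit sequences attaining each upper bound. Since $X\in\mathcal{B}_{A^{1/2}}(\mathcal{H})$, pick $\{y_n\}\subset\mathcal{H}$ with $\|y_n\|_A=1$ and $\|Xy_n\|_A\to b$. For $b\geq\tfrac{1}{\sqrt{2}}$, take $z_n=\left(\begin{smallmatrix}0\\y_n\end{smallmatrix}\right)$: then $\langle\mathbb{S}z_n,z_n\rangle_{\mathbb{A}}=0$ and $\|\mathbb{S}z_n\|_{\mathbb{A}}^4=\|Xy_n\|_A^4\to b^4$. For $0<b<\tfrac{1}{\sqrt{2}}$, set $t_0=\tfrac{1}{\sqrt{2(1-b^2)}}\in(0,1)$ and take
$$z_n=\left(\begin{array}{c} x_n \\ t_0 y_n \end{array}\right),\qquad x_n=\frac{\sqrt{1-t_0^2}}{\|Xy_n\|_A}\,Xy_n,$$
so that $\|x_n\|_A^2+\|t_0y_n\|_A^2=1$. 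A direct calculation shows $|\langle\mathbb{S}z_n,z_n\rangle_{\mathbb{A}}|^2+\|\mathbb{S}z_n\|_{\mathbb{A}}^4=t_0^2(1-t_0^2)\|Xy_n\|_A^2+t_0^4\|Xy_n\|_A^4\to f(t_0^2)=\tfrac{b^2}{4(1-b^2)}$.

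The main obstacle is this last alignment step: in the semi-Hilbertian setting one generally cannot attain the supremum, so both the Cauchy--Schwarz inequality and the seminorm inequality $\|Xy\|_A\leq\|X\|_A\|y\|_A$ must be saturated \emph{simultaneously} along a single sequence. The key trick is that once $y_n$ is chosen so that $\|Xy_n\|_A\to\|X\|_A$, the scaled choice $x_n\propto Xy_n$ makes the Cauchy--Schwarz bound an equality for each $n$, and the seminorm bound tight in the limit, which is exactly what is needed to match the upper bound from the optimization.
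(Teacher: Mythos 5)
Your proposal is correct and follows essentially the same route as the paper: the same reduction to maximizing $b^2s(1-s)+b^4s^2$ over $s=\|y\|_A^2\in[0,1]$ (the paper writes $s=\sin^2\theta$), the same case analysis, and the same attaining sequences built from $y_n$ with $\|Xy_n\|_A\to\|X\|_A$ and first component proportional to $Xy_n$. The only cosmetic caveat is that in the middle case you should discard the finitely many $n$ with $\|Xy_n\|_A=0$ so that $x_n$ is well defined.
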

\begin{proof}
 Let  $z=\left(\begin{array}{cc}
	x \\
	y 
	\end{array}\right) \in \mathcal{H}\oplus \mathcal{H} $ be such that $\|z\|_{\mathbb{A}}=1$, i.e, $\|x\|_{A}^2+\|y\|_{A}^2=1$.
Then $ \langle \mathbb{S}z,z \rangle_{\mathbb{A}} = \langle Xy,x \rangle_{A} ~~\mbox{and} ~~ \langle \mathbb{S}z,\mathbb{S}z \rangle_{\mathbb{A}} = \langle Xy,Xy \rangle_{A}.$ Now we have
\begin{eqnarray*}
	|\langle \mathbb{S}z,z \rangle_{\mathbb{A}}|^2+|\langle \mathbb{S}z,\mathbb{S}z \rangle_{\mathbb{A}}|^2 &\leq& \|Xy\|_{A}^2\|x\|_{A}^2 +\|Xy\|_{A}^4 \\
	&\leq & \sup_{\|x\|_{A}^2+\|y\|_{A}^2=1} \left (\|X\|_{A}^2\|y\|_{A}^2\|x\|_{A}^2 + \|X\|_{A}^4\|y\|_{A}^4 \right)\\
	&=& \sup_{\theta \in [0,\frac{\pi}{2}]} \|X\|_{A}^2 \sin^2 \theta \left ( \cos^2 \theta +\|X\|_{A}^2 \sin^2 \theta \right ).
	\end{eqnarray*}
	First we consider the case $\|X\|_{A}=0.$ Then it is easy to see that $dw_{\mathbb{A}}(\mathbb{S})=0.$

	\noindent Next we consider the case  $ 0 < \|X\|_{A} < \frac{1}{\sqrt{2}}.$
	Then  $$\sup_{\theta \in [0,\frac{\pi}{2}]} \|X\|_{A}^2 \sin^2 \theta \left ( \cos^2 \theta +\|X\|_{A}^2 \sin^2 \theta \right )=\frac{\|X\|_{A}^2}{4(1-\|X\|_{A}^2)}.$$
Therefore, $dw_{\mathbb{A}}(\mathbb{S}) \leq \frac{\|X\|_{A}}{2\sqrt{(1-\|X\|_{A}^2)}}.$ We now show that there exists a sequence $\{z_n\}$ in $\mathcal{H} \oplus \mathcal{H}$ with $\|z_n\|_{\mathbb{A}}=1$ such that 
$$ \lim_{n \to \infty}\{|\langle \mathbb{S}z_n,z_n \rangle_{\mathbb{A}}|^2+|\langle \mathbb{S}z_n,\mathbb{S}z_n \rangle_{\mathbb{A}}|^2\}^\frac{1}{2}=
	\frac{\|X\|_{A}}{2\sqrt{(1-\|X\|_{A}^2)}}  .$$

\noindent Since $X\in \mathcal{B}_{A^{1/2}}(\mathcal{H})$, there exists a sequence $\{y_n \}$ in $\mathcal{H}$ with $\|y_n\|_{A}=1$ such that $\lim_{n \to \infty} \|Xy_n\|_{A} = \|X\|_{A}.$
Let $z_{n}=\frac{1}{\sqrt{\|Xy_n\|_{A}^2+k^2}}\left(\begin{array}{cc}
	Xy_n \\
	ky_n 
	\end{array}\right)$, where $k =  \frac{\|X\|_{A}}{\sqrt{1-2\|X\|_{A}^2}}$. Then
	\begin{eqnarray*}
	\lim_{n\rightarrow \infty} \{|\langle \mathbb{S}z_{n},z_{n} \rangle_{\mathbb{A}}|^2+|\langle \mathbb{S}z_{n},\mathbb{S}z_{n} \rangle_{\mathbb{A}}|^2\} ^{\frac{1}{2}}&=& \frac{\|X\|_{A}}{2\sqrt{1-\|X\|_{A}^2}}.
	\end{eqnarray*}
Therefore, $dw_{\mathbb{A}}(\mathbb{S})=\frac{\|X\|_{A}}{2\sqrt{(1-\|X\|_{A}^2)}}.$

\noindent Now we consider the case  $\|X\|_{A} \geq \frac{1}{\sqrt{2}}$. Then $$\sup_{\theta \in [0,\frac{\pi}{2}]} \|X\|_{A}^2 \sin^2 \theta \left ( \cos^2 \theta +\|X\|_{A}^2 \sin^2 \theta \right )=\|X\|_{A}^4.$$
Therefore, $dw_{\mathbb{A}}(\mathbb{S}) \leq \|X\|_{A}^2. $ We now show that there exists a sequence $\{z_n\}$ in $\mathcal{H} \oplus \mathcal{H}$ with $\|z_n\|_{\mathbb{A}}=1$ such that 
$$ \lim_{n \to \infty}(|\langle \mathbb{S}z_n,z_n \rangle_{\mathbb{A}}|^2+|\langle \mathbb{S}z_n,\mathbb{S}z_n \rangle_{\mathbb{A}}|^2)^\frac{1}{2}=
	\|X\|_{A}^2 .$$
\noindent Since $X\in \mathcal{B}_{A^{1/2}}(\mathcal{H})$, there exists a sequence $\{y_n \}$ in $\mathcal{H}$ with $\|y_n\|_{A}=1$ such that $\lim_{n \to \infty} \|Xy_n\|_{A} = \|X\|_{A}.$	
 If we consider $z_n=\left(\begin{array}{cc}
	0 \\
	y_n 
	\end{array}\right)$, then $ \langle \mathbb{S}z_n,z_n \rangle_{\mathbb{A}} = 0 $ and $ \langle \mathbb{S}z_n,\mathbb{S}z_n \rangle_{\mathbb{A}} = \|Xy_n\|_{A}^2.$ Therefore, $\lim_{n \to \infty}(|\langle \mathbb{S}z_n,z_n \rangle_{\mathbb{A}}|^2+|\langle \mathbb{S}z_n,\mathbb{S}z_n \rangle_{\mathbb{A}}|^2)^\frac{1}{2}=\|X\|_{A}^2.$ This completes the proof.
		\end{proof}

\begin{remark}
In particular, if we consider $A=I$ in Theorem \ref{31} and Theorem \ref{32} then we get \cite[Th. 3.14]{BBBP} and \cite[Th. 3.16]{BBBP}, respectively.
\end{remark}

\bibliographystyle{amsplain}

\end{document}